\documentclass[11pt,reqno]{amsart}

\usepackage[T1]{fontenc}
\usepackage{lmodern}
\usepackage{amsmath,amssymb,mathtools}
\usepackage{microtype}
\usepackage{needspace}
\usepackage[numbers,sort&compress]{natbib}

\usepackage[hidelinks]{hyperref}
\hypersetup{
  pdftitle={Scott topologies on products of countable complete Heyting algebras}
  pdfauthor={Xiaoquan Xu},
  pdfsubject={Scott topologies, products, sobriety, meet continuity, the dichotomy theorem and cardinal spectra},
  pdfborder={0 0 0}
}

\newtheorem{theorem}{Theorem}[section]
\newtheorem{proposition}[theorem]{Proposition}
\newtheorem{lemma}[theorem]{Lemma}
\newtheorem{corollary}[theorem]{Corollary}
\theoremstyle{definition}
\newtheorem{definition}[theorem]{Definition}
\newtheorem{question}[theorem]{Question}
\newtheorem{example}[theorem]{Example}
\newtheorem{remark}[theorem]{Remark}
\numberwithin{equation}{section}

\newcommand{\OO}{\mathcal O}
\newcommand{\pt}{\operatorname{pt}}
\newcommand{\N}{\mathbb N}
\newcommand{\Specsp}{\operatorname{NSpec}_{\mathrm{sp}}}
\newcommand{\Specfr}{\operatorname{NSpec}_{\mathrm{fr}}}
\newcommand{\ssp}{\mathfrak{s}_{\mathrm{sp}}}
\newcommand{\sfr}{\mathfrak{s}_{\mathrm{fr}}}
\newcommand{\cc}{\mathfrak{c}}
\title[Scott topologies of complete Heyting algebras]
{Scott topologies on products of countable complete Heyting algebras}
\author{Xiaoquan Xu}
\address{School of Computer Information Engineering,
Nanchang Institute of Technology, Nanchang 330044, China}
\email{xiqxu2002@163.com}
\urladdr{https://orcid.org/0000-0003-1159-8477}
\subjclass[2020]{Primary 06D22; Secondary 06B35, 06D20, 54B10, 54A25}
\keywords{Complete Heyting algebra, countably presented frame, Scott topology,
quasi-Polish space, sobriety, cardinal spectrum,
meet continuity}

\begin{document}

\begin{abstract}
We prove that the Scott topology commutes with arbitrary products of
countably presented frames and that every such product has a sober Scott
space. In particular, this holds for arbitrary products of countable
complete Heyting algebras. For a family of consonant spaces, Scott-product
compatibility of their open-set lattices is equivalent to consonance of
their topological sum. Countable generation does not suffice: a countably
generated spatial frame can have a non-sober Scott space and fail the
product identity for its square. We also show that the cardinal spectra
of Scott non-sober frames and spatial frames are upward closed and, under
the Continuum Hypothesis, consist of all uncountable cardinals. Finally,
every Artinian $T_0$ web space is a $B$-space; if it is also a $d$-space,
it carries the Scott topology of an algebraic dcpo. Consequently, every
Artinian meet-continuous dcpo is algebraic. This yields a dichotomy theorem:
a dcpo with a non-sober Scott space must fail meet continuity or Artinianity.
\end{abstract}

\maketitle

\section{Introduction}\label{sec:intro}

For posets $P_i$ indexed by a set $I$, the product of the Scott topologies is
always contained in the Scott topology of the product order:
\begin{equation}\label{eq:general-inclusion}
  \prod_{i\in I}\sigma(P_i)
  \subseteq
  \sigma\!\left(\prod_{i\in I}P_i\right).
\end{equation}
The inclusion can be strict, already for two complete lattices. Determining
when equality holds is a basic problem in domain theory and non-Hausdorff
topology, and is closely related to joint Scott continuity of lattice
operations and to sobriety; see, for example,
\citep{GierzEtAl2003,LawsonXu2024}. Throughout this paper, the Scott space
of a product order is distinguished from the ordinary topological product
of the Scott spaces.

A natural class on which to study this problem is the class of complete
Heyting algebras. These are precisely frames, that is, complete lattices
in which finite meets distribute over arbitrary joins. Countability and
the frame law play different roles in the sobriety problem. Miao, Xi, Li
and Zhao constructed countable complete lattices, including a distributive
one, with non-sober Scott spaces \citep{MiaoXiLiZhao2023}. Such examples
cannot be frames: every countable frame has a sober Scott space, as also
follows from the results below. On the other hand, the frame law alone
does not guarantee Scott sobriety, as shown by Xu, Xi and Zhao
\citep{XuXiZhao2021}. Even within the countable class the product problem
is not reduced to the theory of continuous lattices: Jia and Xi
\citep{JiaXi2026} constructed countable frames that are not continuous.

Our positive results concern \emph{countably presented frames}, a class
which contains all countable frames but also contains uncountable ones.
By work of Heckmann and de Brecht, these are exactly the frames isomorphic
to $\OO(X)$ for quasi-Polish spaces $X$; see
\citep{Heckmann2015,deBrecht2013,deBrechtKawai2019}. This representation
converts the product problem into a topological question about consonance.
For a topological sum $X=\coprod_{i\in I}X_i$, the canonical frame
isomorphism
\[
  \OO(X)\cong\prod_{i\in I}\OO(X_i)
\]
identifies the Scott topology on $\OO(X)$ with that on the product order.
We prove that consonance of $X$ makes this topology equal to the product
of the individual Scott topologies. Conversely, when each $X_i$ is
consonant, the Scott-product identity implies that $X$ is consonant.
Thus, for consonant summands, compatibility has an exact characterization
in terms of their topological sum.

Quasi-Polish spaces are LCS-complete, every LCS-complete space is consonant,
and arbitrary topological sums of LCS-complete spaces are LCS-complete
\citep{deBrechtEtAl2019}. We consequently obtain the following theorem.

\begin{theorem}\label{thm:main-intro}
Let $(L_i)_{i\in I}$ be an arbitrary family of countably presented frames.
Then
\[
  \Sigma\!\left(\prod_{i\in I}L_i\right)
  =\prod_{i\in I}\Sigma L_i.
\]
Here the left-hand side carries the Scott topology of the coordinatewise
order and the right-hand side the ordinary topological product.
\end{theorem}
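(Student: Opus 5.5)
The plan is to pass through the topological representation sketched in the introduction. By Heckmann and de Brecht, each $L_i$ is isomorphic to $\OO(X_i)$ for a quasi-Polish space $X_i$. Let $X=\coprod_{i\in I}X_i$. The canonical map $\OO(X)\to\prod_{i\in I}\OO(X_i)$, $U\mapsto (U\cap X_i)_{i\in I}$, is an order isomorphism for the coordinatewise order. Since Scott topologies are order invariants, this map is a homeomorphism from $\Sigma\OO(X)$ onto $\Sigma\bigl(\prod_i\OO(X_i)\bigr)$. By \eqref{eq:general-inclusion} the inclusion $\prod_i\sigma(L_i)\subseteq\sigma(\prod_i L_i)$ always holds, so only the reverse inclusion needs proof. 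I will obtain it from consonance of $X$.

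Recall that $X$ is consonant when every Scott open $\mathcal U\subseteq\OO(X)$ is a union of sets $\blacksquare K=\{U: K\subseteq U\}$ with $K\subseteq X$ compact. Quasi-Polish spaces are LCS-complete, and arbitrary sums of LCS-complete spaces are LCS-complete \citep{deBrechtEtAl2019}. Hence $X$ is LCS-complete and therefore consonant.

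Now take $\mathcal U$ Scott open in $\prod_i\OO(X_i)\cong\OO(X)$ and a point $U\in\mathcal U$. By consonance there is a compact $K\subseteq U$ with $\blacksquare K\subseteq\mathcal U$. A compact subset of a topological sum meets only finitely many summands, say $i_1,\dots,i_n$. Each $K_j=K\cap X_{i_j}$ is compact, because $X_{i_j}$ is clopen. Therefore
\[
\blacksquare K=\bigcap_{j=1}^n \pi_{i_j}^{-1}\bigl(\blacksquare K_j\bigr),
\]
where $\blacksquare K_j\subseteq\OO(X_{i_j})$ is Scott open, since $K_j$ is compact. So $\blacksquare K$ is a basic open set of the product topology containing $U$ and contained in $\mathcal U$. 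Hence $\mathcal U\in\prod_i\sigma(L_i)$.

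The main obstacle is the cited fact that arbitrary (not merely countable) sums of quasi-Polish spaces are LCS-complete, together with LCS-complete $\Rightarrow$ consonant. I will take both from \citep{deBrechtEtAl2019} as the paper indicates. If needed, I would check directly that the defining $G_\delta$-in-locally-compact-sober condition is local and thus passes to disjoint sums. The remaining steps, the order isomorphism and the finiteness of summands met by a compact set, are routine.
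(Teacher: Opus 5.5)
Your proposal is correct and follows essentially the same route as the paper. The paper represents each $L_i$ as $\OO(X_i)$ with $X_i$ quasi-Polish and uses the LCS-completeness, hence consonance, of $X=\coprod_i X_i$. It then splits $\Phi_X(K)$ into the finite-support product $\prod_i\Phi_{X_i}(K\cap X_i)$ of Scott-open sets, exactly as in Theorem~\ref{thm:consonance-characterization}(1) and Corollary~\ref{cor:lcs-products}.
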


In particular, Scott topology commutes with arbitrary products of
countable complete Heyting algebras. No restriction on the index set is
needed. We also prove that a product of countably presented frames is
itself countably presented if and only if only countably many factors are
nontrivial. Hence the Scott-product identity persists for products that
leave the countably presented class. The identity also implies that the
Scott space of every such product is sober. For a single countably
presented frame this sobriety conclusion was already proved by de Brecht
and Kawai \citep[Corollary~8.6]{deBrechtKawai2019}; here it follows within
a product theorem that also applies when the product is not countably
presented.

The distinction between \emph{countable presentation} and \emph{countable
generation} is essential. He and Zhao \citep[Proposition~4.11]{HeZhao2026}
proved that the lower topology of a particular countable dcpo, regarded
as a frame, has a non-sober Scott space. This topology has a countable
base, so its frame of open sets is countably generated and spatial.
Combining this example with Shelah's theorem on the cardinality of a
countably based topology \citep{Shelah1993}, we obtain a countably
generated spatial frame $L_0$ such that
\[
  |L_0|=\cc,~ \Sigma L_0\text{ is non-sober},
\]
and
\[
  \sigma(L_0)\times\sigma(L_0)
  \subsetneq\sigma(L_0\times L_0).
\]
This is a consequence of their construction, rather than a new independent
counterexample. It shows that countable generation cannot replace
countable presentation in Theorem~\ref{thm:main-intro}, even for spatial
frames and two equal factors.

The same example leads to a cardinal-spectrum problem. Let $\sfr$ and
$\ssp$ denote the least cardinalities of a frame and of a spatial frame,
respectively, whose Scott spaces are non-sober. We show that both spectra
are upward closed and that
\begin{equation}\label{eq:intro-bounds}
  \aleph_1\leq\sfr\leq\ssp\leq\cc.
\end{equation}
The lower bound comes from the countable-frame result, while the upper
bound follows from the He--Zhao example. Under the Continuum Hypothesis,
both least cardinalities are $\aleph_1$, and spatial examples exist in
every uncountable cardinality. Moreover, when $\cc>\aleph_1$, a spatial
frame of cardinality $\aleph_1$ with a non-sober Scott space cannot admit
a representation by a countably based space. Thus the cardinal question
also clarifies a limitation of countably based constructions.

A complementary positive result arises from a descending chain condition
rather than a cardinality restriction. The connection with complete Heyting
algebras is provided by the closed-set lattice: a dcpo is meet-continuous
if and only if its lattice of Scott-closed sets is a complete Heyting
algebra, a characterization due to Kou, Liu and Luo
\citep{KouLiuLuo2003}. We recall this characterization, its extension to
posets, and its relation to Ern\'e's web spaces in
Theorem~\ref{thm:mc-characterization}.

We prove that a $T_0$ web space with Artinian specialization order has a
base of open principal upsets, and is therefore a $B$-space. More
generally than the Scott-space application alone, if a $d$-space $X$ has
Artinian specialization order and its closed-set lattice $\Gamma(X)$ is a
complete Heyting algebra, then its specialization order $\Omega X$ is an
algebraic dcpo and its original topology is exactly $\sigma(\Omega X)$
(Theorem~\ref{thm:artinian-dspace}). In particular, every Artinian
meet-continuous dcpo is algebraic and hence Scott sober. The resulting
\emph{Dichotomy theorem for Scott non-sober dcpos}
(Theorem~\ref{thm:dichotomy}) states that a dcpo with a non-sober Scott
space must fail meet continuity or Artinianity, possibly both. We use this
theorem to compare seven familiar non-sober constructions,
including explicit ordinal-rank and finite-intersection arguments for the
Artinianity of the extended Johnstone dcpo and Isbell's complete lattice. Together with
the countable-frame theorem, it motivates asking whether countability and
meet continuity suffice for Scott sobriety, first for complete lattices
and then for arbitrary dcpos.

The paper is organized as follows. Section~\ref{sec:prelim} recalls the
required frame-theoretic and topological facts, including the standard
sobriety criterion associated with the Scott-product identity.
Section~\ref{sec:consonance} proves the consonance characterization.
Section~\ref{sec:products} establishes the product theorem, determines
when the product remains countably presented, and derives sobriety.
Section~\ref{sec:cardinal} treats countable generation and the cardinal
spectra. Section~\ref{sec:artinian} recalls the meet-continuity
characterization, proves the Artinian web-space and $d$-space theorems,
and applies the dichotomy theorem to the non-sober examples.
Section~\ref{sec:questions} records the remaining questions for countable
meet-continuous complete lattices and dcpos, and for frames of cardinality
$\aleph_1$.

\section{Preliminaries}\label{sec:prelim}

We use standard terminology from domain theory and frame theory; see
\citep{GierzEtAl2003,GoubaultLarrecq2013}.

Let $P$ be a poset.  A nonempty subset $D\subseteq P$ is \emph{directed} if
every finite subset of $D$ has an upper bound in $D$.  A subset $U\subseteq P$
is \emph{Scott open} if it is an upper set and, whenever a directed set $D$
has a supremum with $\bigvee D\in U$, one has $D\cap U\ne\varnothing$.  The
Scott topology is denoted by $\sigma(P)$ and the corresponding Scott space by
$\Sigma P$. A poset is a \emph{dcpo} if every directed subset has a
supremum. For dcpos, a map is Scott-continuous if and only if it is
monotone and preserves directed suprema.

For a topological space $X$, write $\OO(X)$ for its frame of open subsets.  A
subset of a topological space is \emph{saturated} if it is an intersection of open
sets. A nonempty subset $A$ is \emph{irreducible} if any two open sets
meeting $A$ have intersection meeting $A$. A $T_0$-space is
\emph{sober} if every irreducible closed subset is the closure of a
unique point. We use the standard fact that sobriety is preserved by
continuous retracts \citep{GoubaultLarrecq2013}.

A complete lattice $L$ is \emph{meet-continuous} if
\[
  a\wedge\bigvee D=\bigvee_{d\in D}(a\wedge d)
\]
for every $a\in L$ and every directed subset $D\subseteq L$.  A \emph{frame}, equivalently a \emph{complete Heyting algebra}, is a complete
lattice $L$ satisfying the stronger identity
\[
  a\wedge\bigvee S=\bigvee_{s\in S}(a\wedge s)
\]
for every $a\in L$ and $S\subseteq L$.  Thus every frame is
meet-continuous.  A point of $L$ is a frame homomorphism
$p:L\to\{0,1\}$.  The space of points $\pt(L)$ has basic open sets
\[
  \widehat a=\{p\in\pt(L):p(a)=1\},~ a\in L.
\]
A frame is \emph{spatial} if it is isomorphic to the frame of open sets
of a topological space. If $L$ is spatial, the canonical map $a\mapsto\widehat a$ is a frame
isomorphism from $L$ onto $\OO(\pt(L))$, and $\pt(L)$ is sober; see
\citep[Chapter~V]{GierzEtAl2003}.

For a compact saturated subset $K$ of a space $X$, put
\[
  \Phi_X(K)=\{U\in\OO(X):K\subseteq U\}.
\]
The family $\Phi_X(K)$ is Scott open in $\OO(X)$.

\begin{definition}\label{def:consonant}
A space $X$ is \emph{consonant} if, for every Scott-open
$\mathcal H\subseteq\OO(X)$ and every $U\in\mathcal H$, there exists a
compact saturated set $K\subseteq U$ such that
\[
  U\in\Phi_X(K)\subseteq\mathcal H.
\]
The empty compact set is allowed.
\end{definition}

A space is \emph{LCS-complete} if it is homeomorphic to a $G_\delta$-subspace
of a locally compact sober space \citep{deBrechtEtAl2019}.  A quasi-Polish
space is a countably based completely quasi-metrizable space
\citep{deBrecht2013}; equivalently, it is a countably based LCS-complete space
\citep{deBrechtEtAl2019}.  We use the following facts.

\begin{theorem}[de Brecht et al.]
\label{thm:lcs}
Every LCS-complete space is sober and consonant.  Moreover, the topological
sum of an arbitrary family of LCS-complete spaces is LCS-complete.
\end{theorem}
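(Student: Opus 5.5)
The plan is to split Theorem~\ref{thm:lcs} into three claims: sobriety, consonance, and closure under sums. Each is proved by reducing to the locally compact sober ambient space. Fix $X\subseteq Y$ with $Y$ locally compact sober and $X=\bigcap_n W_n$, where each $W_n$ is open in $Y$. I will use the standard facts about locally compact sober spaces. By the Hofmann--Mislove theorem, the compact saturated sets of $Y$ form a well-filtered family. Moreover, for $V\in\OO(Y)$ and $y\in V$ there is a compact saturated $Q$ with $y\in\operatorname{int}Q\subseteq Q\subseteq V$, and this interpolation can be iterated.

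\emph{Sobriety.} Let $A\subseteq X$ be irreducible and closed in $X$. Its closure $\overline A^Y$ in $Y$ is irreducible, so by sobriety of $Y$ it equals $\overline{\{y\}}$ for a unique $y$. It remains to show $y\in X$. Suppose $y\notin W_n$. Then $W_n\cap\overline{\{y\}}=\varnothing$, so $W_n$ misses $A$, which contradicts $A\subseteq X\subseteq W_n$. Hence $y\in X$, and $A=\overline{\{y\}}\cap X$ is the closure of $y$ in $X$. The point is unique because $X$ is $T_0$.

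\emph{Consonance.} This is the main obstacle. Let $\mathcal H\subseteq\OO(X)$ be Scott open and $U\in\mathcal H$, and write $U=V\cap X$ with $V$ open in $Y$. Put $V_0=V$. Inductively I will build opens $V_n$ and compact saturated $Q_n$ of $Y$ satisfying $V_{n+1}\subseteq Q_{n+1}\subseteq V_n\cap W_{n+1}$, maintaining the invariant $V_n\cap X\in\mathcal H$. The inductive step goes as follows. By local compactness, $V_n\cap W_{n+1}$ is the directed union of the sets $\operatorname{int}Q$ over finite unions $Q$ of compact saturated sets with $Q\subseteq V_n\cap W_{n+1}$. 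Intersecting with $X$ gives a directed family with supremum $V_n\cap X\in\mathcal H$, since $X\subseteq W_{n+1}$. Scott openness then supplies one such $Q=Q_{n+1}$ with $\operatorname{int}Q_{n+1}\cap X\in\mathcal H$, and I set $V_{n+1}=\operatorname{int}Q_{n+1}$.

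Now let $K=\bigcap_n Q_n$. It is compact saturated in $Y$ by well-filteredness. It also lies in $\bigcap_n W_n=X$ and in $V$, so $K$ is compact saturated in $X$ with $K\subseteq U$. To show $\Phi_X(K)\subseteq\mathcal H$, take $O\in\OO(X)$ with $K\subseteq O$ and write $O=O'\cap X$ with $O'$ open in $Y$. Then $\bigcap_n Q_n\subseteq O'$, so by well-filteredness $Q_n\subseteq O'$ for some $n$. Hence $V_n\cap X\subseteq O$, and since $V_n\cap X\in\mathcal H$ and $\mathcal H$ is an upper set, $O\in\mathcal H$. The delicate points are the directedness of the approximating family and the case where it is empty. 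If $V_n\cap W_{n+1}=\varnothing$, then $V_n\cap X=\varnothing\in\mathcal H$, so $\mathcal H=\OO(X)$ and $K=\varnothing$ works.

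\emph{Sums.} Given $X_i=\bigcap_n W^i_n\subseteq Y_i$ for each $i$, set $Y=\coprod_i Y_i$. This sum is locally compact and sober, since both properties are local and a sum's irreducible sets lie in a single summand. Put $W_n=\coprod_i W^i_n$, which is open in $Y$. Then $\bigcap_n W_n=\coprod_i X_i$, and its subspace topology coincides with the sum topology. So the sum is LCS-complete.
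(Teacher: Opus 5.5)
Your proof is correct. The paper gives no argument of its own here; it only cites Propositions~7.1, 12.1 and~15.1 of de Brecht et al. Your write-up therefore replaces a citation with a self-contained proof, and its three parts are the standard arguments behind those results:
\begin{enumerate}
\item \emph{Sobriety.} The generic point $y$ of $\overline{A}^Y$ lies in every open set of $Y$ that meets $A$, hence in every $W_n$.
\item \emph{Consonance.} You build a decreasing sequence $Q_1\supseteq Q_2\supseteq\cdots$ of compact saturated sets with $\operatorname{int}Q_n\cap X\in\mathcal H$. Its intersection lies in $X$, and well-filteredness controls it.
\item \emph{Sums.} You amalgamate the ambient spaces and the $G_\delta$ witnesses summand by summand.
\end{enumerate}

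What your version shows, and the citation hides, is where each hypothesis enters. The sobriety argument uses only sobriety of $Y$ and works verbatim for any intersection of open sets. Local compactness and countability are therefore used only in the consonance step. There the recursion treats one $W_n$ at a time and has no limit stages at which the invariant $V_n\cap X\in\mathcal H$ would have to survive.

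Two cosmetic points:
\begin{enumerate}
\item The approximating family in the inductive step is never empty, because $\varnothing$ is compact saturated. Your special case is therefore unnecessary. Likewise ``finite unions'' is redundant, since a finite union of compact saturated sets is again compact saturated.
\item Write $X=\bigcap_{n\geq1}W_n$, and note that $Q_{n+1}\subseteq V_n\subseteq Q_n$ for $n\geq1$, so the chain is explicitly decreasing.
\end{enumerate}
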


\begin{proof}[Reference]
These are Propositions~7.1, 12.1 and~15.1 of
\citep{deBrechtEtAl2019}.
\end{proof}

\begin{definition}\label{def:countably-presented}
A frame $L$ is \emph{countably presented} if it admits a presentation in
the category of frames with countably many generators and countably many
relations. A frame is \emph{countably generated} if some countable subset
generates it under finite meets and arbitrary joins. Thus countable
presentability implies countable generation; the converse need not hold.
See
\citep{deBrechtKawai2019,Heckmann2015}.
\end{definition}

The following representation theorem combines results of Heckmann and de Brecht.

\begin{theorem}\label{thm:cp-quasipolish}
For a frame $L$, the following are equivalent:
\begin{enumerate}
\item $L$ is countably presented;
\item $L\cong\OO(X)$ for some quasi-Polish space $X$.
\end{enumerate}
In particular, every countably presented frame is spatial and its point space
is quasi-Polish.
\end{theorem}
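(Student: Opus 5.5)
This theorem combines known results, so I would assemble it from the literature rather than prove it from scratch, checking that the pieces fit.

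For (2)$\Rightarrow$(1), take a quasi-Polish space $X$. By de Brecht's characterization, $X$ is homeomorphic to a $\Pi^0_2$ subspace of $\mathcal P(\omega)$ with the Scott topology. Equivalently, $X$ is sober, countably based, and carves out of a countable base $(B_n)_n$ a $G_\delta$ subspace of its sobrification of a free frame. The frame $\OO(\mathcal P(\omega))$ is the free frame on countably many generators $g_n$, with $g_n$ corresponding to $\{A: n\in A\}$. A $\Pi^0_2$ subspace is a countable intersection of sets of the form $U_k\cup (X\setminus V_k)$ with $U_k,V_k$ open. Restricting to such a subspace corresponds to imposing the countably many frame relations $V_k\le U_k$, where each $U_k,V_k$ is a countable join of finite meets of generators. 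The sublocale of the spatial frame defined by these relations has as its points exactly the points of the $\Pi^0_2$ set. Heckmann's theorem is what is needed here: this quotient is spatial, so it equals $\OO$ of the subspace. I would cite that theorem rather than reprove it.

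For (1)$\Rightarrow$(2), take a presentation $\langle g_n \mid a_k\le b_k\rangle$, where each $a_k,b_k$ is a countable join of finite meets of generators; any relation can be normalized to this form. Map the free frame onto $L$. The points of $L$ are the subsets $A\subseteq\omega$, viewed as points of $\mathcal P(\omega)$, that satisfy every relation. These form $X=\bigcap_k\bigl((\mathcal P(\omega)\setminus \widehat{a_k})\cup\widehat{b_k}\bigr)$, a $\Pi^0_2$ subset, and hence a quasi-Polish space. What remains is to show that $L\cong\OO(X)$, that is, that $L$ is spatial.

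The spatiality of a countably presented frame is the main obstacle. I would take it from Heckmann \citep{Heckmann2015}, whose proof is a Baire-category-style argument on the countably many relations: given $a\not\le b$ in $L$, one builds a point by a countable forcing construction. I would cite it directly. De Brecht and Kawai \citep{deBrechtKawai2019} give an alternative route via their duality.

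The final sentence of the theorem follows immediately. Given (1), the frame $L$ is isomorphic to some $\OO(X)$, so it is spatial. Since $X$ is quasi-Polish it is sober, so $X\cong\pt(\OO(X))\cong\pt(L)$, and therefore $\pt(L)$ is quasi-Polish.
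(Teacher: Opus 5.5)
Your proposal is correct and takes the same route as the paper. The paper likewise obtains the equivalence by combining Heckmann's spatiality theorem, and his representation of point spaces as countably based $\Pi^0_2$ spaces, with de Brecht's characterization of quasi-Polish spaces; you additionally spell out the free-frame/relations correspondence that the paper leaves to the references. Two small expository fixes: the sentence about carving a $G_\delta$ subspace out of ``its sobrification of a free frame'' is garbled and can be dropped, and in $U_k\cup(X\setminus V_k)$ the ambient space should be $\mathcal P(\omega)$, not $X$.
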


\begin{proof}[Reference]
Heckmann proved spatiality of countably presented locales and represented
their point spaces by countably based $\Pi^0_2$ spaces
\citep{Heckmann2015}.  Combined with de Brecht's characterization of
quasi-Polish spaces \citep{deBrecht2013}, this yields the equivalence above;
see also the explicit formulation in \citep{deBrechtKawai2019}.
\end{proof}

We also recall the following standard sobriety criterion; see
\citep[Corollary~II-1.12]{GierzEtAl2003}. We include a proof because its
contrapositive links the two parts of the paper.

\begin{proposition}\label{prop:joint-sup-sober}
Let $L$ be a complete lattice.  If
\[
  \sigma(L\times L)=\sigma(L)\times\sigma(L),
\]
then $\Sigma L$ is sober.
\end{proposition}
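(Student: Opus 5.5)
The plan is the classical argument of \citep[Corollary~II-1.12]{GierzEtAl2003}: use the Scott-product identity to show that binary meet is jointly Scott continuous. We then show that an irreducible closed set is directed and is the closure of its supremum. Throughout, $L$ is a complete lattice, so $\Sigma L$ is $T_0$ and the closure of a point $x$ is ${\downarrow}x$.

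Step one: show that $\wedge\colon\Sigma L\times\Sigma L\to\Sigma L$ is continuous, where the domain carries the product topology. The map $\wedge\colon L\times L\to L$ is monotone and preserves directed suprema in the product order. The latter holds because $\bigvee_{(a,b)\in D}(a\wedge b)$ equals $\bigl(\bigvee\pi_1 D\bigr)\wedge\bigl(\bigvee\pi_2 D\bigr)$ for directed $D$: the inequality ``$\le$'' is trivial, and for ``$\ge$'' we only need that meet is monotone and that $D$ is directed, once we compare the two sides via Scott-open sets. Indeed, it is cleaner to argue topologically. If $U\in\sigma(L)$, then $\wedge^{-1}(U)$ is an upper set in $L\times L$. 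If $\bigvee D\in\wedge^{-1}(U)$ with $D$ directed, then $\bigvee_{(a,b)\in D}(a\wedge b)\ge$ ? This is not automatic without meet continuity, so instead I will prove Scott continuity of $\wedge$ on $L\times L$ as follows. Every Scott-continuous map of dcpos is continuous for Scott topologies, and $\wedge$ is Scott continuous in each variable separately only under meet continuity. I therefore switch to the standard route: use $\vee$ instead of $\wedge$, or better, avoid lattice operations and work directly with irreducible sets.

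Step two (the direct route): let $A$ be an irreducible Scott-closed set; we show $A$ is directed. Take $a,b\in A$ and suppose no upper bound of $\{a,b\}$ lies in $A$. Then ${\uparrow}a\times{\uparrow}b$ is contained in the upper set $W=\{(x,y): x\vee y\notin A\}$. The map $\vee\colon L\times L\to L$ preserves directed sups, since $\bigvee(x_d\vee y_d)=\bigvee x_d\vee\bigvee y_d$ always holds. Hence $\vee$ is Scott continuous, so $W=\vee^{-1}(L\setminus A)$ lies in $\sigma(L\times L)$. By hypothesis $W$ lies in $\sigma(L)\times\sigma(L)$. Since $(a,b)\in W$, there are Scott-open sets $U\ni a$ and $V\ni b$ with $U\times V\subseteq W$.

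Now $U$ and $V$ both meet $A$, so by irreducibility some $c\in A\cap U\cap V$ exists. Then $(c,c)\in W$, that is, $c=c\vee c\notin A$, a contradiction. Hence $A$ is directed. Since $A$ is Scott closed, $s=\bigvee A\in A$, and therefore $A={\downarrow}s=\mathrm{cl}\{s\}$. Uniqueness of $s$ follows from $T_0$, so $\Sigma L$ is sober.

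The main obstacle is the use of the hypothesis: converting the Scott-open set $\vee^{-1}(L\setminus A)$ of the product order into a product-open neighbourhood $U\times V$ of $(a,b)$. The key choice that makes this work is to use joins, which preserve directed suprema in any complete lattice, rather than meets, which do not.
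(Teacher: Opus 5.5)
Your Step two is correct and is essentially the paper's argument: the join map is Scott continuous on $L\times L$, the product identity turns $\vee^{-1}(L\setminus A)$ into a product-open neighbourhood $U\times V$ of $(a,b)$, and irreducibility yields $c\in A\cap U\cap V$ with $c=c\vee c\notin A$, so $A$ is directed and equals ${\downarrow}\bigvee A$. Delete the abandoned Step one. Its claim that $\wedge$ preserves directed suprema in $L\times L$ using only monotonicity and directedness is false without meet continuity, and the proof does not need it.
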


\begin{proof}
The binary join map
\[
  \vee:L\times L\longrightarrow L,~ (x,y)\longmapsto x\vee y,
\]
preserves directed suprema and is therefore Scott-continuous from
$\Sigma(L\times L)$ to $\Sigma L$. Under the assumed identity it is
continuous from the ordinary product $\Sigma L\times\Sigma L$ to
$\Sigma L$.

Let $A$ be a nonempty irreducible Scott-closed subset of $L$. We show that
$A$ is closed under binary joins. Take $a,b\in A$. If $a\vee b\notin A$,
then $U=L\setminus A$ is a Scott-open neighborhood of $a\vee b$.
Continuity of $\vee$ gives Scott-open neighborhoods $V$ of $a$ and $W$ of
$b$ with
\[
  V\times W\subseteq\vee^{-1}(U).
\]
Since $A$ is irreducible and meets $V$ and $W$, choose
$c\in A\cap V\cap W$. Then $c=c\vee c\in U$, a contradiction. Hence
$a\vee b\in A$, and in particular $A$ is directed.

Scott closedness implies that $s=\bigvee A$ belongs to $A$. Since $A$ is
a lower set, $A=\mathord{{\downarrow}}s$. Thus $A$ is the closure of $s$,
and the generic point is unique because Scott spaces are $T_0$.
\end{proof}

\begin{corollary}\label{cor:product-strict}
If $L$ is a complete lattice with a non-sober Scott space, then
\[
  \sigma(L)\times\sigma(L)\subsetneq\sigma(L\times L).
\]
\end{corollary}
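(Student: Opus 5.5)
This corollary is the contrapositive of Proposition~\ref{prop:joint-sup-sober}, combined with the general inclusion \eqref{eq:general-inclusion}. The plan has two steps.

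First, specialize \eqref{eq:general-inclusion} to the index set $I=\{1,2\}$ with $P_1=P_2=L$. This gives
\[
  \sigma(L)\times\sigma(L)\subseteq\sigma(L\times L).
\]
Should one want to verify this directly rather than cite it, note that $\sigma(L\times L)$ is a topology, so it suffices to check that every basic open rectangle $V\times W$ with $V,W\in\sigma(L)$ is Scott open in $L\times L$. Such a rectangle is clearly an upper set. If $D\subseteq L\times L$ is directed with supremum in $V\times W$, then the two coordinate projections of $D$ are directed. Their suprema are the coordinates of $\bigvee D$, because suprema in the product order are computed coordinatewise. Hence there are $d,d'\in D$ with first coordinate of $d$ in $V$ and second coordinate of $d'$ in $W$. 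An upper bound of $d$ and $d'$ in $D$ then lies in $V\times W$, since $V$ and $W$ are upper sets.

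Second, rule out equality. Suppose $\sigma(L)\times\sigma(L)=\sigma(L\times L)$. Then Proposition~\ref{prop:joint-sup-sober} applies and yields that $\Sigma L$ is sober, contradicting the hypothesis. So the inclusion is strict.

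Neither step presents a real obstacle. The only point needing care is that $L$ must be a complete lattice, so that Proposition~\ref{prop:joint-sup-sober} is applicable; this is exactly the hypothesis of the corollary.
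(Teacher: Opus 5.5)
Your proposal is correct and matches the paper's proof: the inclusion is \eqref{eq:general-inclusion} specialized to two copies of $L$, and strictness is the contrapositive of Proposition~\ref{prop:joint-sup-sober}. Your direct check that open rectangles $V\times W$ are Scott open in $L\times L$ is a valid optional verification of the inclusion.
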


\begin{proof}
Combine \eqref{eq:general-inclusion} with the contrapositive of
Proposition~\ref{prop:joint-sup-sober}.
\end{proof}

\section{Consonance and Scott-product compatibility}\label{sec:consonance}

Let $(X_i)_{i\in I}$ be a family of spaces and put
$X=\coprod_{i\in I}X_i$.  The canonical map
\begin{equation}\label{eq:theta}
  \Theta:\OO(X)\longrightarrow\prod_{i\in I}\OO(X_i),
  ~ \Theta(U)=(U\cap X_i)_{i\in I},
\end{equation}
is a frame isomorphism.

\begin{theorem}\label{thm:consonance-characterization}
Let $X_i$ $(i\in I)$ be topological spaces, and set
$X=\coprod_{i\in I}X_i$.
\begin{enumerate}
\item If $X$ is consonant, then $\Theta$ is a homeomorphism
\[
   \Sigma\OO(X)\longrightarrow\prod_{i\in I}\Sigma\OO(X_i).
\]
Equivalently,
\begin{equation}\label{eq:scott-product-open}
  \sigma\!\left(\prod_{i\in I}\OO(X_i)\right)
  =\prod_{i\in I}\sigma(\OO(X_i)).
\end{equation}
\item If every $X_i$ is consonant and \eqref{eq:scott-product-open} holds,
then $X$ is consonant.
\end{enumerate}
Consequently, for a family of consonant spaces, the Scott-product identity
\eqref{eq:scott-product-open} holds if and only if their topological sum is
consonant.
\end{theorem}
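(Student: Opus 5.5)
Since $\Theta$ is an order isomorphism, it is a homeomorphism $\Sigma\OO(X)\to\Sigma\bigl(\prod_i\OO(X_i)\bigr)$. Part (1) is therefore equivalent to the identity \eqref{eq:scott-product-open}. By \eqref{eq:general-inclusion} it suffices to prove the inclusion $\subseteq$.

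\textbf{Part (1).} Let $\mathcal H$ be Scott open in $\prod_i\OO(X_i)$ and let $(U_i)_i\in\mathcal H$. Transport $\mathcal H$ along $\Theta^{-1}$ to a Scott-open family $\mathcal H'\subseteq\OO(X)$ containing $U=\coprod_i U_i$. Consonance of $X$ gives a compact saturated $K\subseteq U$ with $U\in\Phi_X(K)\subseteq\mathcal H'$.

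Since the $X_i$ are open and cover $X$, compactness implies that $K$ meets only finitely many summands, say $i_1,\dots,i_n$. Each $K_j=K\cap X_{i_j}$ is compact, being closed in $K$ because $X_{i_j}$ is clopen, and it is saturated in $X_{i_j}$. Define the basic open set of the product topology
\[
  \mathcal W=\prod_{j\le n}\Phi_{X_{i_j}}(K_j)\times\prod_{i\notin\{i_1,\dots,i_n\}}\OO(X_i).
\]
It contains $(U_i)_i$, and $\Theta^{-1}(\mathcal W)=\Phi_X(K)\subseteq\mathcal H'$, because $K\subseteq V$ if and only if $K_j\subseteq V\cap X_{i_j}$ for all $j$. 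Hence $\mathcal W\subseteq\mathcal H$, and $\mathcal H$ is open in the product topology. Each factor $\Phi_{X_{i_j}}(K_j)$ is Scott open, as recalled in Section~\ref{sec:prelim}.

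\textbf{Part (2).} Let $\mathcal H\subseteq\OO(X)$ be Scott open with $U\in\mathcal H$. By \eqref{eq:scott-product-open}, $\Theta(\mathcal H)$ is open in the product topology. So there are finitely many indices $i_1,\dots,i_n$ and Scott-open $\mathcal V_j\subseteq\OO(X_{i_j})$ with $U\cap X_{i_j}\in\mathcal V_j$ such that
\[
  \prod_j\mathcal V_j\times\prod_{\text{rest}}\OO(X_i)\subseteq\Theta(\mathcal H).
\]
Consonance of $X_{i_j}$ yields compact saturated sets $K_j\subseteq U\cap X_{i_j}$ with $U\cap X_{i_j}\in\Phi_{X_{i_j}}(K_j)\subseteq\mathcal V_j$.

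Put $K=\bigcup_j K_j$. As a finite union of compact sets it is compact, and it is saturated in $X$ because the specialization order of a sum lives within the summands. We have $K\subseteq U$. If $V\supseteq K$, then $V\cap X_{i_j}\in\mathcal V_j$ for every $j$, so $\Theta(V)$ lies in the basic open set above and hence $V\in\mathcal H$. Thus $U\in\Phi_X(K)\subseteq\mathcal H$, and $X$ is consonant.

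The final equivalence combines (1) and (2). The only delicate point is the finiteness step in (1): it uses that the summands form an open cover of the compact set $K$. This is exactly why the product topology, whose basic opens restrict only finitely many coordinates, suffices. The case $K=\varnothing$ is covered by $n=0$, where $\mathcal W$ is the whole product. I expect no serious obstacle.
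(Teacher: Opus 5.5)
Your proof is correct and follows the paper's argument essentially step for step. In (1) you use that a compact $K$ meets only finitely many clopen summands, so that $\Theta(\Phi_X(K))$ is the basic product-open cylinder $\prod_i\Phi_{X_i}(K\cap X_i)$. In (2) you take the finite union of the compact saturated sets given by consonance of the summands. The only cosmetic difference is that you start from a Scott-open set in the product and pull it back along $\Theta^{-1}$, whereas the paper starts in $\OO(X)$ and pushes forward along $\Theta$.
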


\begin{proof}
Because $\Theta$ is an order isomorphism, it identifies the Scott topology on
$\OO(X)$ with the Scott topology on the product poset
$\prod_i\OO(X_i)$.  The product topology
$\prod_i\sigma(\OO(X_i))$ is always contained in the latter Scott topology,
since every coordinate projection preserves directed suprema.

Assume first that $X$ is consonant.  Let $\mathcal H$ be Scott open in
$\OO(X)$ and $U\in\mathcal H$.  Choose a compact saturated set $K\subseteq U$
such that
\[
  U\in\Phi_X(K)\subseteq\mathcal H.
\]
Since the summands form an open cover of $X$, compactness of $K$ implies
that
\[
  F=\{i\in I:K\cap X_i\ne\varnothing\}
\]
is finite.  Put $K_i=K\cap X_i$.  Since $X_i$ is clopen in $X$, each $K_i$
is compact and saturated in $X_i$.  Then
\begin{equation}\label{eq:compact-cylinder}
  \Theta(\Phi_X(K))
  =\prod_{i\in I}\Phi_{X_i}(K_i).
\end{equation}
For $i\notin F$, $K_i=\varnothing$ and
$\Phi_{X_i}(K_i)=\OO(X_i)$; for $i\in F$, $\Phi_{X_i}(K_i)$ is Scott
open.  Hence the right-hand side of
\eqref{eq:compact-cylinder} is a basic product-open neighborhood of
$\Theta(U)$ contained in $\Theta(\mathcal H)$.  Thus $\Theta(\mathcal H)$ is
product open, proving (1).

Conversely, assume that every $X_i$ is consonant and that
\eqref{eq:scott-product-open} holds.  Let $\mathcal H$ be Scott open in
$\OO(X)$ and $U\in\mathcal H$.  Since $\Theta(\mathcal H)$ is product open,
there are a finite set $F\subseteq I$ and Scott-open families
$\mathcal H_i\subseteq\OO(X_i)$ $(i\in F)$ such that, writing
$U_i=U\cap X_i$,
\begin{equation}\label{eq:basic-neighborhood}
  \Theta(U)\in
  \left(\prod_{i\in F}\mathcal H_i\right)
  \times\left(\prod_{i\notin F}\OO(X_i)\right)
  \subseteq \Theta(\mathcal H).
\end{equation}
By consonance of $X_i$, choose compact saturated $K_i\subseteq U_i$ such
that
\[
  U_i\in\Phi_{X_i}(K_i)\subseteq\mathcal H_i
  ~ (i\in F).
\]
Let $K=\bigcup_{i\in F}K_i$, regarded as a subset of the topological sum
$X$.  Since $F$ is finite and the $X_i$ are clopen summands, $K$ is compact
and saturated in $X$; moreover, $K\subseteq U$.  If
$V\in\Phi_X(K)$, then $V\cap X_i\in\Phi_{X_i}(K_i)\subseteq\mathcal H_i$
for every $i\in F$.  Equation~\eqref{eq:basic-neighborhood} therefore gives
$V\in\mathcal H$.  Hence
\[
  U\in\Phi_X(K)\subseteq\mathcal H,
\]
and $X$ is consonant.
\end{proof}

The characterization immediately yields a large positive class.

\begin{corollary}\label{cor:lcs-products}
Let $(X_i)_{i\in I}$ be an arbitrary family of LCS-complete spaces.  Then
\[
  \sigma\!\left(\prod_{i\in I}\OO(X_i)\right)
  =\prod_{i\in I}\sigma(\OO(X_i)).
\]
\end{corollary}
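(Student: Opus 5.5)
The plan is to derive the identity directly from Theorem~\ref{thm:consonance-characterization}(1) together with Theorem~\ref{thm:lcs}. Put $X=\coprod_{i\in I}X_i$. Since each $X_i$ is LCS-complete, the second assertion of Theorem~\ref{thm:lcs} shows that $X$ is LCS-complete. The first assertion of Theorem~\ref{thm:lcs} then shows that $X$ is consonant. Part~(1) of Theorem~\ref{thm:consonance-characterization} applies, so the canonical frame isomorphism $\Theta$ of \eqref{eq:theta} is a homeomorphism from $\Sigma\OO(X)$ onto $\prod_{i\in I}\Sigma\OO(X_i)$.

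The remaining step is to read this homeomorphism as the displayed equality \eqref{eq:scott-product-open}. Because $\Theta$ is an order isomorphism onto the product poset $\prod_i\OO(X_i)$, it carries $\sigma(\OO(X))$ onto $\sigma\bigl(\prod_i\OO(X_i)\bigr)$. Its being a homeomorphism onto the product space therefore says exactly that the two topologies on the underlying set $\prod_i\OO(X_i)$ coincide. The inclusion $\prod_i\sigma(\OO(X_i))\subseteq\sigma\bigl(\prod_i\OO(X_i)\bigr)$ is the general fact \eqref{eq:general-inclusion}. The reverse inclusion is what consonance of $X$ supplies, through the basic neighbourhoods \eqref{eq:compact-cylinder}.

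No step here is a genuine obstacle, since all the work is contained in the cited results. The only point requiring care is that Theorem~\ref{thm:lcs} applies to sums indexed by an arbitrary, possibly uncountable, set $I$. That is precisely the statement of \citep[Proposition~15.1]{deBrechtEtAl2019}, as recorded in Theorem~\ref{thm:lcs}. One should also note that the empty family, and factors with $X_i=\varnothing$, cause no trouble. Indeed, an empty sum is LCS-complete trivially, and $\OO(\varnothing)$ is the one-point frame, whose Scott topology is the only topology on a point.
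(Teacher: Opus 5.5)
Your proposal is correct and follows the paper's proof: Theorem~\ref{thm:lcs} makes the sum $\coprod_i X_i$ LCS-complete and hence consonant, and Theorem~\ref{thm:consonance-characterization}(1) then gives the identity. Your extra remarks, on reading the homeomorphism as an equality of topologies and on empty factors, are accurate but not needed beyond what the paper states.
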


\begin{proof}
By Theorem~\ref{thm:lcs}, the topological sum $\coprod_iX_i$ is
LCS-complete and hence consonant.  Apply
Theorem~\ref{thm:consonance-characterization}(1).
\end{proof}

The converse part also shows that the hypothesis cannot simply be weakened to
consonance of each summand.

\begin{corollary}\label{cor:strict-consonant}
There exist consonant spaces $X$ and $Y$ such that
\[
  \sigma(\OO(X)\times\OO(Y))
  \supsetneq
  \sigma(\OO(X))\times\sigma(\OO(Y)).
\]
\end{corollary}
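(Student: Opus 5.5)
The plan is to reduce the statement to Corollary~\ref{cor:product-strict} with $X=Y$. Suppose we can find a consonant space $X$ whose frame $L=\OO(X)$ has a non-sober Scott space. Then Corollary~\ref{cor:product-strict} gives
\[
  \sigma(\OO(X))\times\sigma(\OO(X))\subsetneq\sigma(\OO(X)\times\OO(X)),
\]
which is the statement with $Y=X$. By Theorem~\ref{thm:consonance-characterization}(1), such an $X$ must have a non-consonant sum $X\sqcup X$, even though $X$ itself is consonant. So the whole task is to find the right $X$, and the two factors are allowed to coincide.

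\textbf{Step 1: choose a candidate.} I would take $X$ to be the space underlying the Xu--Xi--Zhao frame \citep{XuXiZhao2021}, a frame of open sets whose Scott space is non-sober. An alternative candidate is the countably based space from the He--Zhao example \citep{HeZhao2026}, namely the lower topology on a countable dcpo. Neither candidate can be LCS-complete, since otherwise Corollary~\ref{cor:lcs-products} together with Proposition~\ref{prop:joint-sup-sober} would force Scott sobriety. In particular, if the He--Zhao space is used, it is countably based but not quasi-Polish.

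\textbf{Step 2: non-sobriety.} This is imported directly from the cited construction, so it requires no new work.

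\textbf{Step 3: prove consonance of $X$. This is the main obstacle.} Fix a Scott-open $\mathcal H\subseteq\OO(X)$ and $U\in\mathcal H$. I need a compact saturated $K\subseteq U$ with $U\in\Phi_X(K)\subseteq\mathcal H$. The first thing I would try is to exploit the explicit structure of the example. In the He--Zhao lower-topology space, the compact saturated sets are very rich: every principal upset ${\uparrow}x$ is compact, and so are finite unions of such sets. The approach would be:
\begin{enumerate}
\item write $U$ as a directed union of finite unions of basic open sets;
\item use Scott openness of $\mathcal H$ to pass to one such finite union $V\in\mathcal H$;
\item show that $V$ already contains a compact saturated set $K$ with $\Phi_X(K)\subseteq\mathcal H$, arguing by contradiction with a directed family of opens that avoids $\mathcal H$.
\end{enumerate}
If this fails for the chosen candidate, I would look instead for a general criterion guaranteeing consonance of the candidate space, for instance a known consonance result applicable to that class of spaces.

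\textbf{Fallback route.} If no consonant Scott-non-sober example can be verified, I would use the contrapositive of Theorem~\ref{thm:consonance-characterization}(2). This requires two consonant spaces $X,Y$ whose sum $X\sqcup Y$ is not consonant. I would then search among known pairs of consonant spaces from the hyperspace-topology literature (Dolecki--Greco--Lechicki type examples) to see whether one of them has a non-consonant sum.
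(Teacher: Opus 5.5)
\textbf{There is a genuine gap in your main route.} Everything rests on Step~3, and Step~3 is not yet an argument. Items (1) and (2) are routine. Item (3) is exactly the consonance requirement for the reduced open set $V$. Nothing in the He--Zhao lower-topology space, or in any space representing the Xu--Xi--Zhao frame, is shown to supply the compact saturated $K$.

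Moreover, the object you are looking for is stronger than the corollary itself. By Theorem~\ref{thm:consonance-characterization}(1), a consonant $X$ with $\Sigma\OO(X)$ non-sober would be a consonant space whose self-sum $X\coprod X$ is not consonant. Such an $X$ cannot be Hausdorff. To see this, suppose $X$ is Hausdorff and consonant, and let $\mathcal A$ be irreducible Scott closed in $\OO(X)$ with $U,V\in\mathcal A$ but $U\cup V\notin\mathcal A$.
\begin{enumerate}
\item Consonance gives a compact $K\subseteq U\cup V$ with $\Phi_X(K)\cap\mathcal A=\varnothing$.
\item The Hausdorff property splits $K=K_1\cup K_2$ with $K_1\subseteq U$ and $K_2\subseteq V$ compact.
\item Irreducibility, applied to the Scott-open sets $\Phi_X(K_1)\ni U$ and $\Phi_X(K_2)\ni V$, yields $W\in\mathcal A$ with $W\supseteq K$, which is a contradiction.
\end{enumerate}
So $\mathcal A$ is directed and $\Sigma\OO(X)$ is sober. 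Your main route is therefore an open-ended search for an unusual non-Hausdorff example whose consonance nobody has verified. It is not a proof.

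\textbf{Your fallback route is the right one, and it is exactly the paper's proof.} Inclusion \eqref{eq:general-inclusion} always holds. So if equality held for consonant $X$ and $Y$, Theorem~\ref{thm:consonance-characterization}(2) would make $X\coprod Y$ consonant.

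As written, though, this route is also incomplete: you only propose to search for a suitable pair. The missing input is a known result. Nogura and Shakhmatov showed that there are consonant spaces $X$ and $Y$ whose topological sum is not consonant; this is recalled in \citep[Section~13]{deBrechtEtAl2019}. Citing that result and applying the contrapositive of Theorem~\ref{thm:consonance-characterization}(2) finishes the proof in two lines. The detour through non-sobriety and Corollary~\ref{cor:product-strict} is then unnecessary.
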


\begin{proof}
As recalled in \citep[Section~13]{deBrechtEtAl2019}, there are consonant
spaces $X$ and $Y$ whose topological sum $X\coprod Y$ is not consonant.
That source attributes the example to Nogura and Shakhmatov.  If equality held above, Theorem~\ref{thm:consonance-characterization}(2)
would imply that $X\coprod Y$ is consonant, a contradiction.
\end{proof}

\section{Products of countably presented frames}\label{sec:products}

We now pass from spaces to frames.

\begin{theorem}\label{thm:main}
Let $(L_i)_{i\in I}$ be an arbitrary family of countably presented frames.
Then
\begin{equation}\label{eq:main}
  \Sigma\!\left(\prod_{i\in I}L_i\right)
  =\prod_{i\in I}\Sigma L_i.
\end{equation}
\end{theorem}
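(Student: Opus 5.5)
The plan is to reduce the statement to Corollary~\ref{cor:lcs-products} by representing each factor as an open-set lattice. By Theorem~\ref{thm:cp-quasipolish}, for each $i\in I$ choose a quasi-Polish space $X_i$ and a frame isomorphism $\varphi_i:L_i\to\OO(X_i)$. (Choosing one representation per index uses the axiom of choice, which is harmless here. Alternatively, take $X_i=\pt(L_i)$ and $\varphi_i(a)=\widehat a$, which needs no choice because countably presented frames are spatial.) Quasi-Polish spaces are countably based LCS-complete spaces, so each $X_i$ is LCS-complete. Corollary~\ref{cor:lcs-products} then gives
\[
  \sigma\!\left(\prod_{i\in I}\OO(X_i)\right)=\prod_{i\in I}\sigma(\OO(X_i)).
\]

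The next step is to transport this identity along the isomorphisms. An order isomorphism $\varphi_i$ is a homeomorphism $\Sigma L_i\to\Sigma\OO(X_i)$, because the Scott topology is defined purely order-theoretically. The map $\varphi=\prod_i\varphi_i:\prod_iL_i\to\prod_i\OO(X_i)$ is an order isomorphism for the coordinatewise orders, so it is a homeomorphism between the Scott spaces of the two product posets. It is also a homeomorphism between the ordinary topological products $\prod_i\Sigma L_i$ and $\prod_i\Sigma\OO(X_i)$, since a product of homeomorphisms is a homeomorphism. Pulling the displayed equality back along $\varphi$ gives $\sigma(\prod_iL_i)=\prod_i\sigma(L_i)$ on the common underlying set $\prod_iL_i$, which is \eqref{eq:main}.

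I do not expect a real obstacle, since the substantive work is already done in Theorem~\ref{thm:consonance-characterization} and Theorem~\ref{thm:lcs}. The only point to check carefully is that "countably presented" in Theorem~\ref{thm:cp-quasipolish} really yields spaces that are LCS-complete in the sense of Theorem~\ref{thm:lcs}. This holds because the preliminaries state that quasi-Polish spaces are exactly the countably based LCS-complete spaces. I would also note that no hypothesis on $|I|$ is used: the topological sum $\coprod_iX_i$ may be uncountable, and hence not quasi-Polish, but it is still LCS-complete and therefore consonant.
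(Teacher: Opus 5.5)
Your proposal is correct and follows the paper's proof: represent each $L_i$ as $\OO(X_i)$ with $X_i$ quasi-Polish via Theorem~\ref{thm:cp-quasipolish}, use that quasi-Polish spaces are LCS-complete to apply Corollary~\ref{cor:lcs-products}, and transport the identity through the frame isomorphisms. Your added details make explicit what the paper compresses into a single sentence: the product isomorphism is a homeomorphism for both the Scott and the product topologies, and $X_i=\pt(L_i)$ gives a choice-free representation.
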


\begin{proof}
For each $i\in I$, Theorem~\ref{thm:cp-quasipolish} gives a quasi-Polish
space $X_i$ such that
\[
  L_i\cong\OO(X_i).
\]
Every quasi-Polish space is LCS-complete.  Hence
Corollary~\ref{cor:lcs-products} gives
\[
  \sigma\!\left(\prod_{i\in I}\OO(X_i)\right)
  =\prod_{i\in I}\sigma(\OO(X_i)).
\]
Transporting the topologies through the frame isomorphisms yields
\eqref{eq:main}.
\end{proof}

\begin{corollary}\label{cor:self-powers}
If $L$ is a countably presented frame and $\kappa$ is any cardinal, then
\[
  \Sigma(L^\kappa)=(\Sigma L)^\kappa.
\]
\end{corollary}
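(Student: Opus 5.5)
This is the special case of Theorem~\ref{thm:main} in which every factor is the same frame. Take the index set $I=\kappa$, regarded as a set of cardinality $\kappa$, and put $L_i=L$ for every $i\in\kappa$. Then the product $\prod_{i\in\kappa}L_i$ is, as a poset, exactly $L^\kappa$ with the coordinatewise order. Likewise, $\prod_{i\in\kappa}\Sigma L_i$ is the ordinary topological power $(\Sigma L)^\kappa$.

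The hypothesis of Theorem~\ref{thm:main} only requires each $L_i$ to be countably presented. That theorem places no restriction on $|I|$, so it applies to every cardinal $\kappa$. Substituting into \eqref{eq:main} gives $\Sigma(L^\kappa)=(\Sigma L)^\kappa$ as topological spaces on the common underlying set $L^\kappa$.

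Two degenerate cases need a brief check.
\begin{enumerate}
\item For $\kappa=0$, both sides are the one-point space. The empty product of posets is a singleton with its unique topology, and so is the empty product of spaces.
\item For $\kappa=1$, the identity is trivial.
\end{enumerate}
Neither case causes any difficulty, and no other step should present an obstacle.

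If one wants to avoid invoking Theorem~\ref{thm:main} as a black box, the same argument can be run directly. Take a quasi-Polish $X$ with $L\cong\OO(X)$ by Theorem~\ref{thm:cp-quasipolish}. The sum $\coprod_{\kappa}X\cong\kappa\times X$ is LCS-complete by Theorem~\ref{thm:lcs}. Corollary~\ref{cor:lcs-products} then gives the identity, which one transports back along $L\cong\OO(X)$.
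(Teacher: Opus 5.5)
Your proposal is correct and matches the paper, which states this corollary as the immediate special case of Theorem~\ref{thm:main} with $I=\kappa$ and $L_i=L$ for all $i$; your ``direct'' alternative via Corollary~\ref{cor:lcs-products} is just the proof of Theorem~\ref{thm:main} specialized to equal factors. The separate checks for $\kappa=0,1$ are harmless but unnecessary, since Theorem~\ref{thm:main} already allows an arbitrary (including empty) index set.
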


The original countable-frame result is now an immediate special case.

\begin{proposition}\label{prop:countable-cp}
Every countable frame is countably presented.
\end{proposition}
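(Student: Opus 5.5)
The plan is to write down an explicit presentation of a countable frame $L$ by countably many generators and relations, and then show that the presented frame is isomorphic to $L$. As generators I take the elements of $L$ themselves, that is, a symbol $g_a$ for each $a\in L$; there are countably many. The relations are:
\[
  g_1=\top,\qquad g_a\wedge g_b=g_{a\wedge b}\quad(a,b\in L),\qquad \bigvee_{b\in S_a}g_b=g_a .
\]
The first two families are countable because $L$ is countable. The third family encodes arbitrary joins. The difficulty is that $L$ has uncountably many subsets $S\subseteq L$, so I cannot impose $\bigvee_{s\in S}g_s=g_{\bigvee S}$ for every $S$.

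The key observation is that in a countable complete lattice every join is already determined by a countable subset, namely the set of all elements below it. For each $a\in L$, put $S_a=\{b\in L:b\le a\}={\downarrow}a$, which is countable, and impose the single relation above. This gives countably many relations in total, each involving a countable join, as frame presentations allow.

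Let $F$ be the frame presented by these generators and relations. The assignment $g_a\mapsto a$ respects all the relations, so it induces a frame homomorphism $\phi:F\to L$, which is surjective. In the other direction define $\psi:L\to F$ by $\psi(a)=g_a$. The relations give that $\psi$ preserves finite meets and top. The crucial step is to check that $\psi$ preserves arbitrary joins. Given $S\subseteq L$ with $a=\bigvee S$, monotonicity of $\psi$ (which follows from meet preservation) yields $\bigvee_{s\in S}g_s\le g_a$. For the reverse inequality, apply the relation for $S_a$: $g_a=\bigvee_{b\le a}g_b$. The issue is that $b\le a$ does not give $g_b\le\bigvee_{s\in S}g_s$ directly.

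The fix is to use the frame law in $L$: for $b\le a$ we have $b=b\wedge\bigvee S=\bigvee_{s\in S}(b\wedge s)$. So I strengthen the relations to make this available. For each $b\in L$ and each element $c\in L$, add the relation
\[
  g_b\le\bigvee\{g_d: d\le b,\ d\le c\}\quad\text{whenever } b\le c .
\]
This is still not enough, because the identity $b=\bigvee_{s\in S}(b\wedge s)$ ranges over arbitrary $S$. The cleaner route is to impose, for every pair $(a,T)$ with $T$ a \emph{countable} subset of $L$ and $a=\bigvee T$, the relation $g_a=\bigvee_{t\in T}g_t$. Since $L$ is countable, every $S\subseteq L$ is itself countable, so these relations already cover all joins. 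The problem is counting them: there can be continuum many countable subsets. I will reduce this by noting that a join is determined by its downset. For $S\subseteq L$, let $\overline{S}$ denote the closure of $S$ under finite joins together with its downward closure; then $\bigvee S=\bigvee\overline S$, and $\overline S$ is a directed lower set.

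So the main obstacle is that a countable frame may still have uncountably many distinct ideals with the same supremum, and hence uncountably many join relations. My first attempt to get around this is to quote instead the known fact, from the cited references, that countable frames are countably presented via the quasi-Polish correspondence. Theorem~\ref{thm:cp-quasipolish} reduces the statement to showing $L\cong\OO(X)$ for a quasi-Polish $X$. Take $X=\pt(L)$, which is countably based by $\widehat a$, $a\in L$. Spatiality of $L$ then follows from the $\Pi^0_2$ description of points, since the conditions defining points are countably many. I expect this spatiality step to be the real difficulty, and I would cite Heckmann there.
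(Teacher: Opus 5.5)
Your first attempt, an explicit presentation with generators $g_a$ ($a\in L$), stalls where you say it does. The join relations are indexed by subsets of $L$, and neither passing to downsets nor passing to ideals reduces them to countably many. Everything therefore rests on your fallback through Theorem~\ref{thm:cp-quasipolish}, and both of its substantive steps are missing.

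The first missing step is spatiality. You propose to cite Heckmann, but Heckmann's theorem gives spatiality of \emph{countably presented} frames, which is the conclusion you are trying to reach, so the citation is circular. Your justification that ``the conditions defining points are countably many'' is the same unresolved counting problem in another form. A point $p$ must satisfy $p(\bigvee S)=\bigvee_{s\in S}p(s)$ for every $S\subseteq L$, and you have not shown that countably many of these conditions suffice. Moreover, even a $\Pi^0_2$ description of $\pt(L)$ inside the power $\mathbb S^L$ of Sierpi\'nski space would not yield spatiality, since it says nothing about points separating elements of $L$. A complete atomless Boolean algebra, for instance, has no points at all, and the empty point space is trivially $\Pi^0_2$.

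The second missing step is quasi-Polishness of $\pt(L)$. That $\pt(L)$ has the countable base $\{\widehat a:a\in L\}$ is not enough, because countably based sober spaces need not be quasi-Polish; $\mathbb Q$ is an example.

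The paper supplies both steps by citing independent results. Ern\'e's lemma says every countable frame is spatial, so $L\cong\OO(\pt(L))$ with $\pt(L)$ sober and having only countably many open sets. De Brecht's theorem says every sober space with a countable topology is quasi-Polish. With these two facts, Theorem~\ref{thm:cp-quasipolish} finishes the proof at once. Without them, or proofs of them, your argument does not establish the proposition.
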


\begin{proof}
Let $L$ be a countable frame.  By Ern\'e's theorem
\citep[Lemma~1.1(1)]{Erne2019}, $L$ is spatial.  Thus $L\cong\OO(\pt(L))$, where
$\pt(L)$ is sober.  Since $L$ is countable, the topology
$\OO(\pt(L))$ is countable.  De Brecht proved that every sober space with a
countable topology is quasi-Polish \citep[Corollary~6.6]{deBrecht2018}.  Hence $\pt(L)$ is
quasi-Polish, and Theorem~\ref{thm:cp-quasipolish} implies that $L$ is
countably presented.
\end{proof}

\begin{corollary}\label{cor:countable-old}
For every family $(L_i)_{i\in I}$ of countable frames,
\[
  \Sigma\!\left(\prod_{i\in I}L_i\right)
  =\prod_{i\in I}\Sigma L_i.
\]
In particular, this holds for arbitrary powers of every countable frame.
\end{corollary}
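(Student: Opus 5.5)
The plan is to reduce Corollary~\ref{cor:countable-old} directly to Theorem~\ref{thm:main}, using Proposition~\ref{prop:countable-cp} as the bridge. Given a family $(L_i)_{i\in I}$ of countable frames, Proposition~\ref{prop:countable-cp} shows that each $L_i$ is countably presented. The family $(L_i)_{i\in I}$ therefore satisfies the hypothesis of Theorem~\ref{thm:main}, and that theorem yields
\[
  \Sigma\!\left(\prod_{i\in I}L_i\right)=\prod_{i\in I}\Sigma L_i
\]
with no restriction on the index set $I$.

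For the statement about powers, take $L_i=L$ for all $i$ in an index set of cardinality $\kappa$. Then apply either the general identity just obtained or Corollary~\ref{cor:self-powers} to get $\Sigma(L^\kappa)=(\Sigma L)^\kappa$.

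There is no real obstacle at this level, since all the substantive work is already contained in the cited results. These are Ern\'e's spatiality theorem and de Brecht's result that sober spaces with countable topology are quasi-Polish, both used in Proposition~\ref{prop:countable-cp}, together with the consonance argument behind Theorem~\ref{thm:main}.

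The only point worth checking is that the identity in Theorem~\ref{thm:main} holds as an equality of topologies on the same underlying set $\prod_i L_i$. This is what makes transport through frame isomorphisms legitimate. Concretely, if $L_i\cong\OO(X_i)$ via $\phi_i$, then the product map $\prod_i\phi_i$ is simultaneously an order isomorphism of the products and a product of homeomorphisms of the Scott spaces. It therefore carries the Scott topology of one product to the Scott topology of the other, and the product topology of one to the product topology of the other. This was already used in the proof of Theorem~\ref{thm:main}, so nothing new is needed.
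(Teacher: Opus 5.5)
Your proposal is correct and matches the paper's argument. The paper presents this corollary as an immediate special case of Theorem~\ref{thm:main}, applied via Proposition~\ref{prop:countable-cp} (countable frames are countably presented), with the statement about powers being Corollary~\ref{cor:self-powers}; your remark on transporting topologies through the product of frame isomorphisms is accurate but already handled in the proof of Theorem~\ref{thm:main}.
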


\begin{remark}\label{rem:noncontinuous}
Corollary~\ref{cor:countable-old} applies to arbitrary powers of the
non-continuous countable frames constructed by Jia and Xi
\citep{JiaXi2026}.  Thus Scott-product compatibility, even for every
self-power, does not force continuity within the class of countable frames.
\end{remark}

The next result explains why the arbitrary-index version of
Theorem~\ref{thm:main} goes beyond the class appearing in its hypotheses.
Call a frame \emph{trivial} if it has one element.

\begin{theorem}\label{thm:product-cp}
Let $(L_i)_{i\in I}$ be a family of countably presented frames, and put
\[
  J=\{i\in I:L_i\text{ is nontrivial}\}.
\]
Then $\prod_{i\in I}L_i$ is countably presented if and only if $J$ is
countable.
\end{theorem}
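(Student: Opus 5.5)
The plan is to translate everything through Theorem~\ref{thm:cp-quasipolish} into topology, using the isomorphism $\Theta$ of \eqref{eq:theta}. Choose quasi-Polish spaces $X_i$ with $L_i\cong\OO(X_i)$. A frame is trivial exactly when $X_i=\varnothing$, because the empty space is the only space with a one-element topology. Then
\[
  \prod_{i\in I}L_i\cong\prod_{i\in I}\OO(X_i)\cong\OO\Bigl(\coprod_{i\in I}X_i\Bigr)=\OO\Bigl(\coprod_{i\in J}X_i\Bigr).
\]
The trivial factors can be deleted, since they are one-element frames and contribute nothing to the product. So the question becomes when $\OO(X)$ is countably presented, for $X=\coprod_{i\in J}X_i$ with each $X_i$ nonempty and quasi-Polish.

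\emph{If $J$ is countable.} By Theorem~\ref{thm:lcs}, $X$ is LCS-complete, hence sober. It is also countably based, since a countable union of countable bases of the clopen summands is a base. So $X$ is a countably based LCS-complete space, which is quasi-Polish by the equivalence recalled in Section~\ref{sec:prelim}. Theorem~\ref{thm:cp-quasipolish} then shows that $\OO(X)$ is countably presented. An alternative purely frame-theoretic route is to take the disjoint union of the countable presentations. One adds a generator $e_i$ for each $i\in J$ and imposes the relations $e_i\wedge e_j=0$ for $i\ne j$ and $\bigvee e_i=1$. Each generator of $L_i$ is forced below $e_i$, and each relation of $L_i$ is relativized to $\downarrow e_i$. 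This is still countable, but verifying it is more work, so I would use the topological route.

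\emph{If $J$ is uncountable.} Suppose $\prod_iL_i$ is countably presented. Then by Theorem~\ref{thm:cp-quasipolish}, $\prod_iL_i\cong\OO(Y)$ for some quasi-Polish, hence countably based, space $Y$. The goal is a contradiction. In $\prod_iL_i$, let $\delta_i$ be the element with top in coordinate $i$ and bottom elsewhere. For $i\in J$ these elements are nonzero and pairwise disjoint. So $\OO(Y)$ contains an uncountable family of nonempty, pairwise disjoint open sets. This is impossible in a countably based space: each such set contains a nonempty basic open set, and disjointness makes the assignment injective. This contradiction shows $J$ is countable.

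Note that this argument works on the frame side directly, so it does not depend on whether $Y$ is homeomorphic to $X$; it only needs some countable base for $Y$. The main point needing care is the identification of trivial frames with the empty space, and I expect it to be the only step that requires any real checking. It holds because in a one-element frame $0=1$.
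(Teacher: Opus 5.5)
Your proof is correct. The direction ``$J$ countable $\Rightarrow$ countably presented'' is the paper's argument: $X=\coprod_{i\in J}X_i$ is LCS-complete by Theorem~\ref{thm:lcs} and countably based, hence quasi-Polish.

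For the converse you reach the contradiction by a different route. The paper stays with the particular space $X$. LCS-completeness makes $X$ sober, so $X\cong\pt(\OO(X))$, and Theorem~\ref{thm:cp-quasipolish} then shows that $X$ itself is quasi-Polish. After that, basic sets $x_i\in B_i\subseteq X_i$ inject $J$ into the countable base.

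You instead use an invariant of the frame: the uncountable family of nonzero, pairwise disjoint elements $\delta_i$. Any isomorphism $\prod_iL_i\cong\OO(Y)$ carries these to nonempty, pairwise disjoint open sets. So it suffices that \emph{some} representing space be countably based, and you need neither sobriety of $X$ nor the identification $X\cong\pt(\OO(X))$. The final counting step is the same in both proofs.

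Your route is more elementary and slightly more general. In any frame, the finite meets of a countable generating set serve as a countable ``base''. Hence the same argument shows that $\prod_iL_i$ is not even countably generated when $J$ is uncountable. The paper's route keeps the whole argument inside the single space $X$, at the price of invoking the sobriety of LCS-complete spaces.
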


\begin{proof}
Trivial factors do not affect the product, so it is enough to work with the
indices in $J$.  For each $i\in J$, choose a quasi-Polish space $X_i$ with
$L_i\cong\OO(X_i)$.  Since $L_i$ is nontrivial and spatial, $X_i$ is
nonempty.  Put
\[
  X=\coprod_{i\in J}X_i.
\]
Then
\begin{equation}\label{eq:product-open-sum}
  \prod_{i\in I}L_i\cong\OO(X).
\end{equation}

Suppose first that $J$ is countable.  Each $X_i$ is countably based, so $X$
is countably based.  By Theorem~\ref{thm:lcs}, $X$ is LCS-complete.  Hence
$X$ is quasi-Polish, and Theorem~\ref{thm:cp-quasipolish} together with
\eqref{eq:product-open-sum} shows that $\prod_iL_i$ is countably presented.

Conversely, suppose that $\prod_iL_i$ is countably presented.  The space $X$
is LCS-complete by Theorem~\ref{thm:lcs}, hence sober.  Therefore the
canonical map from $X$ to $\pt(\OO(X))$ is a homeomorphism.  By
\eqref{eq:product-open-sum} and the assumption, Theorem~\ref{thm:cp-quasipolish}
shows that $\pt(\OO(X))$, and therefore $X$, is quasi-Polish; in particular,
$X$ is countably based.

A topological sum of uncountably many nonempty spaces cannot be countably
based.  Indeed, if $\mathcal B$ were a countable base and $x_i\in X_i$ were
chosen for each $i\in J$, then for each $i$ there would be
$B_i\in\mathcal B$ with $x_i\in B_i\subseteq X_i$.  The sets $B_i$ are
nonempty and pairwise distinct.  Thus $J$ must be countable.
\end{proof}

\begin{corollary}\label{cor:uncountable-power-notcp}
Let $L$ be a nontrivial countably presented frame and let $\kappa$ be an
uncountable cardinal.  Then $L^\kappa$ is not countably presented, but
\[
  \Sigma(L^\kappa)=(\Sigma L)^\kappa.
\]
\end{corollary}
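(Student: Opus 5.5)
The corollary combines two earlier results, applied to the constant family indexed by $\kappa$. Take $I=\kappa$ and set $L_i=L$ for every $i\in I$. Then $\prod_{i\in I}L_i=L^\kappa$, as a frame with the coordinatewise order.

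\emph{Failure of countable presentation.} Since $L$ is nontrivial, the set $J$ of nontrivial indices in Theorem~\ref{thm:product-cp} is all of $I$, so $|J|=\kappa$. Because $\kappa$ is uncountable, $J$ is uncountable. The ``only if'' direction of Theorem~\ref{thm:product-cp} therefore shows that $L^\kappa$ is not countably presented. Before invoking that theorem, I would check one hidden point: a nontrivial frame is one with $0\neq 1$, and this is exactly what makes the corresponding quasi-Polish space $X_i$ nonempty in its proof. That nonemptiness is what the counting argument needs; no further work is required.

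\emph{Scott-product identity.} Corollary~\ref{cor:self-powers}, which is Theorem~\ref{thm:main} applied to this constant family, gives
\[
  \Sigma(L^\kappa)=(\Sigma L)^\kappa
\]
directly. Theorem~\ref{thm:main} places no restriction on the index set, so the uncountability of $\kappa$ causes no difficulty.

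I expect no real obstacle; the proof is just a matter of citing the right results. The only subtle point is conceptual. The identity is not obtained by applying a result about countably presented frames to $L^\kappa$ itself, since that frame is not countably presented. It comes from Theorem~\ref{thm:main}, which relies on consonance of the topological sum $\coprod_{i<\kappa}X$ via Theorem~\ref{thm:lcs} and Theorem~\ref{thm:consonance-characterization}. I would add a sentence saying this explicitly, since it is the purpose of the corollary.
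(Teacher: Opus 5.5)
Your proposal is correct and matches the paper's approach: the paper states this corollary without a separate proof, as the immediate combination of Theorem~\ref{thm:product-cp} (applied to the constant family with $J=\kappa$ uncountable) and Corollary~\ref{cor:self-powers}, which is Theorem~\ref{thm:main} for that family. Your check that nontriviality makes each $X_i$ nonempty is the same point the paper uses in the counting argument.
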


Thus countable presentability is not preserved by arbitrary products, while
the Scott-product identity is.  The same identity also yields sobriety well
beyond the countably presented case.

\begin{theorem}\label{thm:product-sober}
Let $(L_i)_{i\in I}$ be an arbitrary family of countably presented frames and
put $L=\prod_{i\in I}L_i$.  Then $\Sigma L$ is sober.
\end{theorem}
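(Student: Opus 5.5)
The plan is to apply Proposition~\ref{prop:joint-sup-sober} to the complete lattice $L=\prod_{i\in I}L_i$. This requires the identity
\[
  \sigma(L\times L)=\sigma(L)\times\sigma(L).
\]
We will obtain it from Theorem~\ref{thm:main} by regarding $L\times L$ as a product of countably presented frames, indexed by the disjoint union $I\sqcup I$.

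\textbf{Step 1: the square as a single product.} Set $M_{(i,k)}=L_i$ for $i\in I$ and $k\in\{0,1\}$. The canonical order isomorphism
\[
  L\times L\cong\prod_{(i,k)\in I\times\{0,1\}}M_{(i,k)}
\]
is a frame isomorphism. It therefore carries the Scott topology of $L\times L$ onto the Scott topology of the big product. Every $M_{(i,k)}$ is countably presented, so Theorem~\ref{thm:main} gives
\[
  \Sigma(L\times L)\cong\prod_{(i,k)}\Sigma L_i.
\]

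\textbf{Step 2: regrouping the product topology.} The ordinary product topology is associative. Hence
\[
  \prod_{(i,k)}\Sigma L_i\cong\Bigl(\prod_{i}\Sigma L_i\Bigr)\times\Bigl(\prod_i\Sigma L_i\Bigr),
\]
and this homeomorphism is compatible with the isomorphism of Step 1. Theorem~\ref{thm:main} applied to $(L_i)_{i\in I}$ identifies each factor on the right with $\Sigma L$. Combining the two steps gives $\Sigma(L\times L)=\Sigma L\times\Sigma L$, as topologies on the same underlying set $L\times L$.

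\textbf{Step 3: conclusion.} $L$ is a complete lattice, because it is a product of frames. Proposition~\ref{prop:joint-sup-sober} then yields that $\Sigma L$ is sober.

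The only real point of care is Step 2. The several identifications must all be the identity on the underlying set $L\times L$, so that we have an equality of topologies and not merely an abstract homeomorphism. This holds because each map involved is just the rearrangement of coordinates.
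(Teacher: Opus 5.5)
Your proposal is correct and follows the paper's proof exactly: you apply Theorem~\ref{thm:main} to the family indexed by $I\times\{0,1\}$ and to the family indexed by $I$ to get $\Sigma(L\times L)=\Sigma L\times\Sigma L$, and then invoke Proposition~\ref{prop:joint-sup-sober}. Your added remark, that every identification is a coordinate rearrangement on the same underlying set $L\times L$, makes explicit what the paper leaves implicit.
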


\begin{proof}
The lattice $L\times L$ is naturally the product of two copies of each $L_i$.
Applying Theorem~\ref{thm:main} to the family indexed by
$I\times\{0,1\}$, and also to the family indexed by $I$, gives
\[
  \Sigma(L\times L)
  =\left(\prod_{i\in I}\Sigma L_i\right)
   \times\left(\prod_{i\in I}\Sigma L_i\right)
  =\Sigma L\times\Sigma L.
\]
Proposition~\ref{prop:joint-sup-sober} now gives the sobriety of $\Sigma L$.
\end{proof}

\begin{corollary}\label{cor:countable-sober}
Every countable frame has a sober Scott space.
\end{corollary}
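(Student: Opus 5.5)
The plan is to obtain Corollary~\ref{cor:countable-sober} by specializing Theorem~\ref{thm:product-sober} to a one-element index set. Let $L$ be a countable frame. By Proposition~\ref{prop:countable-cp}, $L$ is countably presented. That proposition rests on Ern\'e's spatiality theorem for countable frames and on de Brecht's result that sober spaces with countable topologies are quasi-Polish. Taking $I=\{0\}$ and $L_0=L$ in Theorem~\ref{thm:product-sober} then shows at once that $\Sigma L$ is sober.

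The argument can also be unwound to show where the content lies. Corollary~\ref{cor:self-powers} with $\kappa=2$ gives
\[
  \Sigma(L\times L)=\Sigma L\times\Sigma L,
\]
that is, $\sigma(L\times L)=\sigma(L)\times\sigma(L)$. Proposition~\ref{prop:joint-sup-sober} converts this identity into sobriety of $\Sigma L$. Its proof uses only that binary join is jointly continuous on the product of Scott spaces. Every irreducible Scott-closed set is therefore closed under binary joins, hence directed, hence the principal ideal generated by its supremum.

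There is no real obstacle at this stage, since all the work sits in the earlier results. The substantive step is Proposition~\ref{prop:countable-cp}, which places countable frames in the quasi-Polish class. Once that is granted, the LCS-complete consonance machinery behind Theorem~\ref{thm:main} supplies the product identity, and sobriety follows formally.
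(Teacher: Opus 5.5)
Your proposal is correct and is the same as the paper's proof, which applies Proposition~\ref{prop:countable-cp} and Theorem~\ref{thm:product-sober} with a one-element index set. Your unwinding via Corollary~\ref{cor:self-powers} with $\kappa=2$ and Proposition~\ref{prop:joint-sup-sober} is exactly what the proof of Theorem~\ref{thm:product-sober} does in that case.
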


\begin{proof}
Apply Proposition~\ref{prop:countable-cp} and
Theorem~\ref{thm:product-sober} to a one-element index set.
\end{proof}

\begin{remark}\label{rem:debrecht-kawai}
For a single countably presented frame, the sobriety conclusion is already
contained in \citet[Corollary~8.6]{deBrechtKawai2019}.
Theorem~\ref{thm:product-sober} is stronger in a different direction: by
Theorem~\ref{thm:product-cp}, an arbitrary product with uncountably many
nontrivial factors is not countably presented, yet its Scott space remains
sober.
\end{remark}

\section{Countably generated spatial frames and cardinal spectra}
\label{sec:cardinal}

Theorem~\ref{thm:main} and Corollary~\ref{cor:countable-sober} give positive
results under countable presentation and countability, respectively. We
now examine the weaker condition of countable generation and the
cardinalities of spatial frames for which Scott sobriety fails. Write
$\cc=2^{\aleph_0}$ for the cardinality of the continuum.

\subsection{Countable generation and countably based representations}

The following elementary observation identifies countable generation
with the existence of a countably based spatial representation.

\begin{lemma}\label{lem:generated-based}
Let $L$ be a spatial frame. The following are equivalent:
\begin{enumerate}
\item $L$ is countably generated;
\item $L\cong\OO(X)$ for some countably based space $X$;
\item every space $X$ with $L\cong\OO(X)$ is countably based.
\end{enumerate}
\end{lemma}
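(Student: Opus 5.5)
I would prove the cycle $(3)\Rightarrow(2)\Rightarrow(1)\Rightarrow(3)$. The implication $(3)\Rightarrow(2)$ is immediate. Since $L$ is spatial, some space $X$ has $L\cong\OO(X)$; for instance $X=\pt(L)$. By (3) this $X$ is countably based.

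For $(2)\Rightarrow(1)$, fix a countable base $\mathcal B$ of $X$ and transport it through the isomorphism $L\cong\OO(X)$. Every open set of $X$ is a union of members of $\mathcal B$. So every element of $L$ is a join of elements of the countable image of $\mathcal B$, and $L$ is generated by this set under arbitrary joins alone, hence certainly under finite meets and joins.

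For $(1)\Rightarrow(3)$, let $C\subseteq L$ be a countable generating set and let $X$ be any space with an isomorphism $\varphi:L\to\OO(X)$. Put $C'$ for the closure of $C\cup\{1\}$ under finite meets; it is still countable. By the frame distributive law, every element obtained from $C$ by finite meets and arbitrary joins is already a join of elements of $C'$. To see this, note that the set of joins of members of $C'$ contains $C$ and is closed under arbitrary joins. It is also closed under binary meets, since
\[
  \Bigl(\bigvee_j a_j\Bigr)\wedge\Bigl(\bigvee_k b_k\Bigr)=\bigvee_{j,k}(a_j\wedge b_k),
\]
and $a_j\wedge b_k\in C'$. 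Being a subframe containing $C$, this set is therefore all of $L$.

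Applying $\varphi$, every open set of $X$ is a union of members of the countable family $\varphi(C')$. Since $\varphi$ preserves joins, and joins in $\OO(X)$ are unions, $\varphi(C')$ is a countable base of $X$.

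The only point needing care is this normal-form step: the subframe generated by $C$ is exactly the set of joins of finite meets from $C$. It rests on infinite distributivity, together with the convention that the empty meet $1$ and the empty join $0$ are included. Note also that (3) quantifies over all spaces, not only sober ones. This is harmless, because the argument uses only the lattice isomorphism $L\cong\OO(X)$ and never the points of $X$.
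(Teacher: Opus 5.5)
Your proposal is correct and follows the paper's proof essentially step for step. It uses the same cycle of implications, and it proves $(1)\Rightarrow(3)$ the same way: close the generators under finite meets, including the empty meet, and use distributivity to show that joins of this countable family form a subframe, hence everything. The only cosmetic difference is that you run the normal-form argument inside $L$ and then transport it along $\varphi$, whereas the paper works directly in $\OO(X)$.
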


\begin{proof}
Suppose that a countable set $G$ generates $L$, and fix an isomorphism
$L\cong\OO(X)$. The images of the finite meets of members of $G$, including
the empty meet, form a countable family $\mathcal B$ closed under finite
intersections. Arbitrary unions of members of $\mathcal B$ form a subframe
of $\OO(X)$ containing the images of $G$. Hence every open set is such a
union, so $\mathcal B$ is a base for $X$. This proves (1) $\Rightarrow$ (3).
Spatiality gives (3) $\Rightarrow$ (2). Finally, a countable base of $X$
generates $\OO(X)$ under arbitrary joins, proving (2) $\Rightarrow$ (1).
\end{proof}

We use the following cardinality dichotomy.

\begin{theorem}[Shelah {\citep[Theorem~1]{Shelah1993}}]
\label{thm:shelah}
If $X$ is a countably based space, then $\OO(X)$ is either countable or
has cardinality $\cc$.
\end{theorem}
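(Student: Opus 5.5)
This is Shelah's theorem, which the paper cites, but I would still sketch a self-contained proof. Fix a countable base $\mathcal B=\{B_n:n\in\N\}$ for $X$. The upper bound is immediate: every open set is a union of a subfamily of $\mathcal B$, so the map $\mathcal P(\N)\to\OO(X)$, $S\mapsto\bigcup_{n\in S}B_n$, is surjective, and hence $|\OO(X)|\le\cc$. It therefore remains to show that if $\OO(X)$ is uncountable, then it has size at least $\cc$. Since the continuum hypothesis is not assumed, this cannot be a pure cardinality count. It has to be a perfect-set argument of descriptive-set-theoretic type.

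Next I encode open sets as reals. Identify $U\in\OO(X)$ with its code $c(U)=\{n:B_n\subseteq U\}\subseteq\N$. These codes are exactly the subsets $S\subseteq\N$ closed under the relation ``$B_m\subseteq\bigcup_{n\in S}B_n\Rightarrow m\in S$''. That condition involves arbitrary unions, so the set $C$ of codes is in general only coanalytic, or worse, as a subset of $2^{\N}$. The key step is a perfect-set dichotomy for $C$. Assuming $C$ is uncountable, I want a Cantor scheme $(U_s)_{s\in 2^{<\omega}}$ of open sets such that each branch $x\in 2^{\omega}$ yields a distinct open set $U_x$. The natural choice is $U_x=\bigcup_n U_{x\restriction n}$, with the scheme built so that different branches are separated by some basic $B_n$ lying in one union but not the other.

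To build the scheme, I would use a condensation argument on the $T_0$ quotient. Say an open $U$ is \emph{condensed} if uncountably many open sets $V$ agree with $U$ on every basic set of a suitable finite ``window''. Iterating, I would split uncountable families of open sets by deciding membership of $B_n$ in each set, which is a countable-branching decision at every stage. Pigeonhole gives an uncountable subfamily at each stage, and I would look for two incompatible extensions.

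The main obstacle is exactly this step. Open sets are determined by $c(U)$, but the map $U\mapsto c(U)$ does not commute with increasing unions in a way that makes the limits $U_x$ land on distinct codes. Moreover, $C$ need not be Borel, so the perfect set theorem for analytic sets does not apply directly. Shelah's actual argument handles this, I believe, by working with closed sets and a rank or absoluteness argument: the statement ``$\OO(X)$ has a perfect subset'' is $\Sigma^1_2$-like, and one can pass to a forcing extension or use Shoenfield absoluteness. I would try a ZFC tree-rank argument first, and fall back on absoluteness if that fails. Given the difficulty, in the paper itself I would simply invoke \citep[Theorem~1]{Shelah1993}, as the statement does, and keep only the upper bound argument explicit.
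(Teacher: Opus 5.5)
Your treatment matches the paper's. The paper gives no proof of this statement and simply invokes \citep[Theorem~1]{Shelah1993}, which is also where you end up. Your explicit upper bound, via the surjection $S\mapsto\bigcup_{n\in S}B_n$ from $\mathcal P(\N)$ onto $\OO(X)$, is correct. It is in the spirit of the paper's later remark that $|L_0|\le\cc$ needs no appeal to Shelah.

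Be clear, though, that your sketch of the lower bound is not an argument. Two things are missing:
\begin{enumerate}
\item You never secure an invariant that keeps both branches of the Cantor scheme uncountable while making the separating information survive the increasing unions $U_x$.
\item Shoenfield absoluteness is not directly available. The set of points of $X$, coded in $2^{\N}$, is an arbitrary set and not one definable from a real.
\end{enumerate}
So the nontrivial direction rests entirely on the citation.
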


\begin{proposition}\label{prop:countably-based-barrier}
Let $X$ be a countably based space. If $\Sigma\OO(X)$ is non-sober, then
\[
  |\OO(X)|=\cc.
\]
Equivalently, every countably generated spatial frame with a non-sober
Scott space has cardinality $\cc$.
\end{proposition}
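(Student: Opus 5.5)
The plan is to combine Shelah's dichotomy (Theorem~\ref{thm:shelah}) with the countable-frame sobriety result (Corollary~\ref{cor:countable-sober}). Let $X$ be countably based with $\Sigma\OO(X)$ non-sober. By Theorem~\ref{thm:shelah}, $\OO(X)$ is either countable or has cardinality $\cc$, so it suffices to rule out the countable case.

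Suppose instead that $\OO(X)$ is countable. Since $\OO(X)$ is a frame, Corollary~\ref{cor:countable-sober} shows that $\Sigma\OO(X)$ is sober. This contradicts the hypothesis, so $|\OO(X)|=\cc$. The input is only that $\OO(X)$ is a countable complete Heyting algebra; neither countable presentation nor Theorem~\ref{thm:main} is needed beyond what already feeds into Corollary~\ref{cor:countable-sober}.

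For the equivalent formulation, let $L$ be a countably generated spatial frame whose Scott space is non-sober. By Lemma~\ref{lem:generated-based}, (1)$\Rightarrow$(2), we have $L\cong\OO(X)$ for some countably based space $X$. An order isomorphism is a homeomorphism of Scott spaces, so $\Sigma\OO(X)$ is non-sober, and the first part gives $|L|=|\OO(X)|=\cc$. Conversely, if $X$ is countably based, then $\OO(X)$ is a spatial frame, and it is countably generated by (2)$\Rightarrow$(1) of the same lemma. Hence the two formulations are equivalent.

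There is no real obstacle here. The only points needing care are transporting non-sobriety along the order isomorphism and noting that Shelah's theorem applies to an arbitrary countably based space, with no separation axiom required.
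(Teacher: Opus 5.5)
Your proposal is correct and follows the paper's own proof: rule out a countable $\OO(X)$ with Corollary~\ref{cor:countable-sober}, invoke Theorem~\ref{thm:shelah} to get $|\OO(X)|=\cc$, and pass to countably generated spatial frames via Lemma~\ref{lem:generated-based}. Your explicit check of both directions of the equivalence, and of transporting non-sobriety along the order isomorphism, only spells out what the paper leaves implicit.
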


\begin{proof}
If $\OO(X)$ were countable, Corollary~\ref{cor:countable-sober} would make
its Scott space sober. Thus $\OO(X)$ is uncountable, and
Theorem~\ref{thm:shelah} gives its cardinality. The equivalent formulation
follows from Lemma~\ref{lem:generated-based}.
\end{proof}

\begin{remark}\label{rem:barrier}
Assume $\cc>\aleph_1$. A spatial frame of cardinality $\aleph_1$ with a
non-sober Scott space cannot be countably generated. In particular, no
spatial representation of it can have a countable base. Thus a
countably based construction cannot settle the $\aleph_1$ problem in a
model with $\cc>\aleph_1$.
\end{remark}

For a poset $P$, its \emph{lower topology} $\omega(P)$ is generated by the
subbasic sets $P\setminus\mathord{{\uparrow}}x$, $x\in P$, where
$\mathord{{\uparrow}}x=\{y\in P:x\leq y\}$. He and Zhao obtained the
following example.

\begin{theorem}[He--Zhao {\citep[Proposition~4.11]{HeZhao2026}}]
\label{thm:hezhao}
There exists a countable dcpo $\widehat P$ such that the frame
$\omega(\widehat P)$ has a non-sober Scott space.
\end{theorem}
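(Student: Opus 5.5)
This statement is cited from He and Zhao \citep[Proposition~4.11]{HeZhao2026}, so within the paper it is used as a black box. If I were to reconstruct it, I would first fix a countable dcpo $\widehat P$ and set $L=\omega(\widehat P)$, the frame of lower-open sets. Since $\widehat P$ is countable, the subbasic sets $\widehat P\setminus\mathord{\uparrow}x$ form a countable subbase. Hence $L$ is countably generated and spatial, as in Lemma~\ref{lem:generated-based}. By Proposition~\ref{prop:countably-based-barrier}, a non-sober example must then have $|L|=\cc$. So $\widehat P$ has to be chosen so that its lower topology is uncountable, for instance by building in infinitely many incomparable chains, so that arbitrary unions of the basic opens yield continuum many sets.

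The core step is to exhibit an irreducible Scott-closed subset $\mathcal A\subseteq L$ that is not of the form $\mathord{\downarrow}U$. A useful guide is the proof of Proposition~\ref{prop:joint-sup-sober}: such an $\mathcal A$ cannot be closed under binary unions, so it is not directed. I would try to take $\mathcal A$ to be the Scott closure of a family of open sets $\{U_n\}$ indexed along an infinite structure of $\widehat P$, in the style of Johnstone or Isbell.

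To show that $\mathcal A$ is irreducible, I would argue that any two Scott-open families $\mathcal H_1,\mathcal H_2$ meeting $\mathcal A$ contain common members $U_n$ for all large $n$. This would come from a compactness argument: Scott-open membership of a union of lower-open sets should be witnessed by finitely many subbasic pieces, mirroring consonance-type behaviour.

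To show that $\mathcal A$ has no generic point, I would argue that the union $\bigcup_n U_n$, and any candidate supremum, leaves $\mathcal A$. This requires checking that Scott-closedness of $\mathcal A$ is preserved under the directed unions in $L$, which amounts to analysing directed unions of lower-open sets concretely.

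The main obstacle is the precise description of $\mathcal A$ and the verification that it is Scott closed. This requires controlling directed unions of arbitrary open sets in an uncountable frame. I would attempt it by first showing that a set is lower-open exactly when it avoids finitely many principal upsets near each of its points.
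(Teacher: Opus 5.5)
Your opening sentence matches the paper: Theorem~\ref{thm:hezhao} is imported from He and Zhao without proof, and the citation is the only justification given here. The reconstruction you add, however, has a genuine gap. The statement is an existence claim, and its entire content is a specific dcpo $\widehat P$ together with an irreducible Scott-closed family in $\omega(\widehat P)$ that has no generic point. You never specify $\widehat P$ or the sets $U_n$. So none of the three steps (irreducibility, Scott-closedness, absence of a generic point) is actually carried out.

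The only design constraint you extract, $|\omega(\widehat P)|=\cc$, is far from sufficient, and ``infinitely many incomparable chains'' does not address the stronger necessary conditions the paper yields. Write $X=(\widehat P,\omega(\widehat P))$. By Theorem~\ref{thm:product-sober} (one index) and Theorem~\ref{thm:cp-quasipolish}, $\pt(\omega(\widehat P))$ must not be quasi-Polish. By Theorem~\ref{thm:consonance-characterization}(1) and Proposition~\ref{prop:joint-sup-sober}, $X\coprod X$ must not be consonant.

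Your intermediate steps also pull against each other. The ``compactness argument'' is just Scott openness. If $U_m\in\mathcal H$, then some finite union of basic sets $\widehat P\setminus\mathord{\uparrow}F$ ($F$ finite) contained in $U_m$ lies in $\mathcal H$; these pieces are basic, not subbasic. To upgrade this to ``$U_n\in\mathcal H$ for all large $n$'', the natural arrangement is that every basic open set contained in some $U_m$ is contained in $U_n$ for all large $n$.

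On the other side, if $\mathcal A$ is the Scott closure of $\{U_n\}$, there is nothing to check about its Scott-closedness. What you need is an explicit Scott-open $\mathcal H\subseteq\omega(\widehat P)$ with $W=\bigcup_nU_n\in\mathcal H$ and $U_n\notin\mathcal H$ for every $n$. This suffices, because any generic point $V$ would satisfy $W\subseteq V\in\mathcal A$.

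Under the arrangement above, every compact $K\subseteq W$ is covered by finitely many basic sets, each lying in almost all $U_n$, so $K\subseteq U_n$ for large $n$. Hence no $\Phi_X(K)$ can serve as the separating family. Any separating $\mathcal H$ would then have to witness that $X$ itself is not consonant. Constructing such a Scott-open family, one not generated by compact sets, on a frame of size $\cc$ is the real difficulty. It is exactly what the specific He--Zhao construction supplies. Without it, the proof in this paper's context is the citation alone.
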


Here $\omega(\widehat P)$ is ordered by inclusion and is regarded as the
open-set frame of the space $(\widehat P,\omega(\widehat P))$. The Scott
topology in the statement is imposed on this frame, not on the underlying
dcpo $\widehat P$. Theorem~\ref{thm:hezhao} provides the following
contrast with the product theorem.

\begin{corollary}\label{cor:generated-counterexample}
There exists a countably generated spatial frame $L_0$ such that
\begin{equation}\label{eq:generated-counterexample}
  |L_0|=\cc,~ \Sigma L_0\text{ is non-sober},
\end{equation}
and
\begin{equation}\label{eq:generated-strict-square}
  \sigma(L_0)\times\sigma(L_0)
  \subsetneq\sigma(L_0\times L_0).
\end{equation}
Consequently, ``countably presented'' cannot be replaced by ``countably
generated'' in Theorem~\ref{thm:main}, even for spatial frames and a
two-fold self-product.
\end{corollary}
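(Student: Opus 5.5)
Take $L_0=\omega(\widehat P)$, the frame of lower-open sets of the countable dcpo $\widehat P$ from Theorem~\ref{thm:hezhao}, ordered by inclusion. By construction $L_0$ is the open-set frame of the space $X=(\widehat P,\omega(\widehat P))$, so it is spatial. The lower topology is generated by the subbasic sets $\widehat P\setminus\mathord{\uparrow}x$ with $x\in\widehat P$. Since $\widehat P$ is countable, there are only countably many of these, and their finite intersections form a countable base for $X$. Lemma~\ref{lem:generated-based}, (2)$\Rightarrow$(1), then shows that $L_0$ is countably generated.

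Non-sobriety of $\Sigma L_0$ is exactly the content of Theorem~\ref{thm:hezhao}. Since $X$ is countably based and $\Sigma\OO(X)=\Sigma L_0$ is non-sober, Proposition~\ref{prop:countably-based-barrier} gives $|L_0|=\cc$. This settles \eqref{eq:generated-counterexample}.

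For \eqref{eq:generated-strict-square}, $L_0$ is a frame and hence a complete lattice with a non-sober Scott space, so Corollary~\ref{cor:product-strict} gives the strict inclusion directly. The final sentence follows by comparison with Theorem~\ref{thm:main}. If countable generation could replace countable presentation there, then the family consisting of two copies of $L_0$ would satisfy $\Sigma(L_0\times L_0)=\Sigma L_0\times\Sigma L_0$. That identity contradicts \eqref{eq:generated-strict-square}.

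Each step is a direct invocation of an earlier result, so I expect no real obstacle. The only points that need care are interpretive. First, the Scott topology must be placed on the frame $\omega(\widehat P)$ and not on $\widehat P$ itself, as the remark after Theorem~\ref{thm:hezhao} stresses. Second, the countable base has to come from the countably many subbasic sets, which is what makes Lemma~\ref{lem:generated-based} and Shelah's theorem (via Proposition~\ref{prop:countably-based-barrier}) applicable.
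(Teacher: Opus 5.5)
Your proposal is correct and follows the paper's proof essentially step for step. You make the same choice $L_0=\omega(\widehat P)$, derive a countable base from the countable subbase and hence countable generation via Lemma~\ref{lem:generated-based}, take non-sobriety from Theorem~\ref{thm:hezhao}, obtain $|L_0|=\cc$ from Proposition~\ref{prop:countably-based-barrier}, and get the strict inclusion from Corollary~\ref{cor:product-strict}, with the final assertion by comparison with Theorem~\ref{thm:main}.
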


\begin{proof}
Take $L_0=\omega(\widehat P)$ from Theorem~\ref{thm:hezhao}, and put
$X_0=(\widehat P,\omega(\widehat P))$. Since $\widehat P$ is countable,
its lower topology has a countable subbase and hence a countable base.
Thus $L_0=\OO(X_0)$ is spatial and countably generated by
Lemma~\ref{lem:generated-based}. Its Scott space is non-sober by
Theorem~\ref{thm:hezhao}, and
Proposition~\ref{prop:countably-based-barrier} gives $|L_0|=\cc$.
Equation~\eqref{eq:generated-strict-square} follows from
Corollary~\ref{cor:product-strict}. The last assertion follows by comparison
with Theorem~\ref{thm:main}.
\end{proof}

\begin{remark}\label{rem:presentation-generation}
The frame $L_0$ is not countably presented, despite having a countable
generating set. On the other hand, Corollary~\ref{cor:uncountable-power-notcp}
provides frames that are not countably presented but still satisfy the
Scott-product identity for their squares. Thus countable presentation is
a sufficient, not a necessary, condition, whereas countable generation
is insufficient even under spatiality.
\end{remark}

\subsection{Upward propagation and the least cardinalities}

\begin{definition}\label{def:spectra}
Let $\Specfr$ be the class of infinite cardinals $\kappa$ for which there
is a frame $L$ with $|L|=\kappa$ and non-sober Scott space $\Sigma L$.
Let $\Specsp$ be the corresponding class with the additional requirement
that $L$ be spatial. By Corollary~\ref{cor:generated-counterexample}, both
classes are nonempty. Put
\[
  \sfr=\min\Specfr,~ \ssp=\min\Specsp.
\]
\end{definition}

Since every spatial frame is a frame, and every countable frame has a
sober Scott space, we have
\[
  \aleph_1\leq\sfr\leq\ssp.
\]
The following retraction argument shows that these two least cardinals
determine the entire spectra.

\begin{proposition}[Upward propagation]\label{prop:upward}
Let $L$ be a frame with non-sober Scott space, and let $M$ be any frame.
Then $\Sigma(L\times M)$ is non-sober. If $L$ and $M$ are spatial, then
$L\times M$ is spatial.
\end{proposition}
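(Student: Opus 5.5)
The plan is to realize $\Sigma L$ as a continuous retract of $\Sigma(L\times M)$ and then invoke the fact, recalled in Section~\ref{sec:prelim}, that sobriety is preserved by continuous retracts. The spatiality clause will be settled separately through the sum construction already used in Section~\ref{sec:consonance}.

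For the retraction, consider the maps
\[
  e:L\to L\times M,\ e(x)=(x,0),\qquad
  r:L\times M\to L,\ r(x,y)=x ,
\]
where $0$ is the bottom element of $M$. Both maps are monotone. Directed suprema in $L\times M$ are computed coordinatewise, and in the first coordinate $e$ just copies the given directed set while in the second it is constant at $0$. Hence $e$ and $r$ preserve directed suprema, so each is Scott-continuous between the respective Scott spaces; this uses only the preliminary fact on dcpos, as both sides are complete lattices. Since $r\circ e=\mathrm{id}_L$, the space $\Sigma L$ is a continuous retract of $\Sigma(L\times M)$.

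If $\Sigma(L\times M)$ were sober, the retract $\Sigma L$ would then be sober, contradicting the hypothesis. Therefore $\Sigma(L\times M)$ is non-sober. The only point needing care is that the retract statement is applied to Scott topologies on the product order, not to the product topology $\Sigma L\times\Sigma M$. The argument above addresses exactly this, because continuity is checked against $\sigma(L\times M)$ directly.

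For spatiality, write $L\cong\OO(X)$ and $M\cong\OO(Y)$. The canonical isomorphism $\Theta$ of \eqref{eq:theta} gives $\OO(X\sqcup Y)\cong\OO(X)\times\OO(Y)\cong L\times M$, so $L\times M$ is spatial. The frame structure of $L\times M$ is coordinatewise, and so it matches the structure transported by $\Theta$. I expect no genuine obstacle here. The mildly delicate point is only to make sure the retraction is stated within the Scott topology of the product order.
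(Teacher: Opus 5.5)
Your proposal is correct and follows the paper's proof essentially verbatim. The paper likewise uses the Scott-continuous pair $x\mapsto(x,0_M)$ and $(x,y)\mapsto x$ to exhibit $\Sigma L$ as a retract of $\Sigma(L\times M)$ and invokes preservation of sobriety under retracts. It obtains spatiality from $\OO(X\sqcup Y)\cong\OO(X)\times\OO(Y)$.
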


\begin{proof}
Let $0_M$ be the bottom element of $M$. Define
\[
  i:L\longrightarrow L\times M,~ i(x)=(x,0_M),
  ~
  p:L\times M\longrightarrow L,~ p(x,y)=x.
\]
Both maps preserve directed suprema and are Scott-continuous. Since
$p\circ i=\operatorname{id}_L$, the space $\Sigma L$ is a retract of
$\Sigma(L\times M)$. Sobriety is preserved by retracts, so
$\Sigma(L\times M)$ cannot be sober.

If $L\cong\OO(X)$ and $M\cong\OO(Y)$, then
$L\times M\cong\OO(X\coprod Y)$, proving spatiality.
\end{proof}

\begin{corollary}\label{cor:spectrum-upward}
Both $\Specfr$ and $\Specsp$ are upward closed. In particular,
\[
  \Specfr=\{\kappa:\kappa\text{ is a cardinal and }\kappa\geq\sfr\},
\]
and
\[
  \Specsp=\{\kappa:\kappa\text{ is a cardinal and }\kappa\geq\ssp\}.
\]
\end{corollary}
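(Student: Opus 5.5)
Given $\kappa\in\Specfr$ and a cardinal $\lambda\geq\kappa$, I will show that $\lambda\in\Specfr$, and likewise for $\Specsp$. The two displayed descriptions then follow from the definitions of $\sfr$ and $\ssp$ as minima. Every member is $\geq$ the minimum by definition, and upward closure gives the reverse inclusion. The classes are nonempty by Corollary~\ref{cor:generated-counterexample}, so the minima exist; note also that $\lambda\geq\kappa$ is infinite, as the definition of the spectra requires.

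Fix a frame $L$ with $|L|=\kappa$ and non-sober $\Sigma L$, spatial in the $\Specsp$ case. The plan is to apply Proposition~\ref{prop:upward} with a frame $M$ chosen so that $|L\times M|=\lambda$. I take $M=\OO(D_\lambda)$, where $D_\lambda$ is the discrete space of cardinality $\lambda$, i.e.\ the powerset of $\lambda$. This is spatial, but its cardinality is $2^\lambda$, which is too large. So instead I choose a spatial frame of cardinality exactly $\lambda$: the frame $M_\lambda=\lambda+1$, the ordinal $\lambda$ with a top element adjoined, viewed as a chain. Every complete chain is a frame, since finite meets distribute over arbitrary joins in a chain. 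It is spatial: it is the open-set lattice of the space $\lambda$ with the topology of lower sets $\{\alpha:\alpha<\beta\}$ for $\beta\leq\lambda$, together with the whole space. Alternatively, any complete chain is isomorphic to the opens of its Alexandrov-type upper topology on the set of completely prime elements; I will verify this concretely using the down-set topology on $\lambda$. The set $\{\downarrow\text{-closed initial segments of }\lambda\}$ consists of the ordinals $\leq\lambda$, so it has cardinality $\lambda$ for infinite $\lambda$ and is closed under arbitrary unions and finite intersections. Hence it is a topology whose open-set frame has cardinality $\lambda$.

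Then Proposition~\ref{prop:upward} gives that $\Sigma(L\times M_\lambda)$ is non-sober, and that $L\times M_\lambda$ is spatial whenever $L$ is. Its cardinality is $\kappa\cdot\lambda=\lambda$ by cardinal arithmetic, since both are infinite and $\kappa\le\lambda$. Hence $\lambda\in\Specfr$, and $\lambda\in\Specsp$ in the spatial case.

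The only point needing care is producing a spatial frame of each prescribed infinite cardinality; the chain of initial segments of $\lambda$ handles this. Everything else is a direct application of Proposition~\ref{prop:upward} together with the arithmetic $\kappa\cdot\lambda=\max(\kappa,\lambda)$.
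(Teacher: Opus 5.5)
Your proposal is correct and follows the paper's proof essentially verbatim. Like the paper, you take the product of $L$ with the spatial chain $\lambda+1$, realized as the initial-segment topology on the ordinal $\lambda$, apply Proposition~\ref{prop:upward}, and use $|L\times M_\lambda|=\lambda$; you additionally spell out how the displayed descriptions follow from upward closure and nonemptiness, which the paper leaves implicit.
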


\begin{proof}
Let $L$ be a frame with non-sober Scott space and $|L|=\lambda$. Fix an
infinite cardinal $\kappa\geq\lambda$. Give the ordinal $\kappa$ the
topology
\[
  \tau_\kappa=\{[0,\alpha):\alpha\leq\kappa\}
\]
of initial segments, and set
$C_\kappa=\OO(\kappa,\tau_\kappa)$. This is a spatial frame of cardinality
$\kappa$, isomorphic to the complete ordinal chain $\kappa+1$.
Consequently, $|L\times C_\kappa|=\kappa$ and
$\Sigma(L\times C_\kappa)$ is non-sober by
Proposition~\ref{prop:upward}. If $L$ is spatial, so is the product.
This proves upward closure for both spectra and hence the displayed
identities.
\end{proof}

\begin{theorem}\label{thm:zfc-bounds}
The least cardinalities of Scott non-sober frames and spatial frames
satisfy
\begin{equation}\label{eq:zfc-bounds}
  \aleph_1\leq\sfr\leq\ssp\leq\cc.
\end{equation}
In particular, in ZFC there is a spatial frame with a non-sober Scott
space at every cardinality $\kappa\geq\cc$.
\end{theorem}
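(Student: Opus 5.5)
The plan is to assemble the bounds from results already established, treating the three inequalities and the final ZFC assertion separately.

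The first two inequalities are already recorded after Definition~\ref{def:spectra}. For $\aleph_1\leq\sfr$, Corollary~\ref{cor:countable-sober} shows that every countable frame has a sober Scott space. Hence every member of $\Specfr$ is uncountable, while $\Specfr$ is nonempty by Corollary~\ref{cor:generated-counterexample}. For $\sfr\leq\ssp$, every spatial frame is a frame, so $\Specsp\subseteq\Specfr$, and the minimum of the larger class is at most the minimum of the smaller one. Both minima exist because classes of cardinals are well-ordered.

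For the upper bound $\ssp\leq\cc$, I will use Corollary~\ref{cor:generated-counterexample}. The frame $L_0=\omega(\widehat P)$ is spatial, has non-sober Scott space, and satisfies $|L_0|=\cc$. Thus $\cc\in\Specsp$, and so $\ssp\leq\cc$. The one point needing care is the cardinality. It comes from Proposition~\ref{prop:countably-based-barrier}, which rests on Shelah's dichotomy (Theorem~\ref{thm:shelah}) together with the exclusion of the countable case by Corollary~\ref{cor:countable-sober}. All of this is already available, so no new argument is needed here.

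For the final assertion, let $\kappa\geq\cc$. Then $\kappa\geq\ssp$, since $\ssp\leq\cc$. By Corollary~\ref{cor:spectrum-upward}, $\Specsp$ is upward closed, so $\kappa\in\Specsp$. Concretely, the witness is $L_0\times C_\kappa$. It has cardinality $\cc\cdot\kappa=\kappa$, it is spatial, and its Scott space is non-sober by Proposition~\ref{prop:upward}. Every step is a direct citation, so I expect no real obstacle. The only subtle input is the exact value $|L_0|=\cc$, and that is supplied by the earlier corollary rather than proved here.
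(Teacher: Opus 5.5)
Your proposal is correct and follows the paper's proof: the lower bound comes from Corollary~\ref{cor:countable-sober}, $\sfr\leq\ssp$ from $\Specsp\subseteq\Specfr$, the upper bound from the frame $L_0$ of Corollary~\ref{cor:generated-counterexample}, and the final assertion from the upward closure in Corollary~\ref{cor:spectrum-upward}. One minor point: you do not need the exact value $|L_0|=\cc$, and hence not Shelah's theorem, for $\ssp\leq\cc$. Since $L_0$ is a family of subsets of a countable set, $|L_0|\leq\cc$ already gives $\ssp\leq|L_0|\leq\cc$, as the paper notes right after the proof.
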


\begin{proof}
The lower bound follows from Corollary~\ref{cor:countable-sober}, and
$\sfr\leq\ssp$ holds by definition. The upper bound is given by
Corollary~\ref{cor:generated-counterexample}. The last assertion follows
from Corollary~\ref{cor:spectrum-upward}.
\end{proof}

For the upper bound in \eqref{eq:zfc-bounds} alone, Shelah's theorem is
not needed: $L_0=\omega(\widehat P)$ is a family of subsets of a countable
set, so $|L_0|\leq\cc$. Theorem~\ref{thm:shelah} gives the exact value
$|L_0|=\cc$ and the general restriction on countably based
representations in Proposition~\ref{prop:countably-based-barrier}.

\begin{corollary}[CH]\label{cor:ch}
Assume the Continuum Hypothesis. Then
\[
  \sfr=\ssp=\aleph_1.
\]
For every uncountable cardinal $\kappa$, there is a spatial frame
$L_\kappa$ of cardinality $\kappa$ such that $\Sigma L_\kappa$ is
non-sober and
\[
  \sigma(L_\kappa)\times\sigma(L_\kappa)
  \subsetneq\sigma(L_\kappa\times L_\kappa).
\]
\end{corollary}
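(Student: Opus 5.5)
Under CH we have $\cc=\aleph_1$, so the whole statement should follow from Theorem~\ref{thm:zfc-bounds} together with the upward closure in Corollary~\ref{cor:spectrum-upward} and Corollary~\ref{cor:product-strict}.

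\emph{Step 1: the least cardinals.} Theorem~\ref{thm:zfc-bounds} gives
\[
  \aleph_1\leq\sfr\leq\ssp\leq\cc .
\]
Substituting $\cc=\aleph_1$ collapses this chain to $\sfr=\ssp=\aleph_1$.

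\emph{Step 2: a spatial example in each uncountable cardinality.} Let $\kappa$ be uncountable. Then $\kappa\geq\aleph_1=\ssp$, so $\kappa\in\Specsp$ by Corollary~\ref{cor:spectrum-upward}. This gives a spatial frame $L_\kappa$ with $|L_\kappa|=\kappa$ and $\Sigma L_\kappa$ non-sober.

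Explicitly, take $L_\kappa=L_0\times C_\kappa$, where $L_0$ is the frame from Corollary~\ref{cor:generated-counterexample} and $C_\kappa$ is the chain frame from the proof of Corollary~\ref{cor:spectrum-upward}.
\begin{itemize}
\item Its cardinality is $|L_0\times C_\kappa|=\cc\cdot\kappa=\kappa$, since $\cc=\aleph_1\leq\kappa$.
\item It is spatial, and $\Sigma L_\kappa$ is non-sober, by Proposition~\ref{prop:upward}.
\end{itemize}

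\emph{Step 3: the strict product inclusion.} $L_\kappa$ is a complete lattice with non-sober Scott space, so Corollary~\ref{cor:product-strict} gives
\[
  \sigma(L_\kappa)\times\sigma(L_\kappa)\subsetneq\sigma(L_\kappa\times L_\kappa).
\]

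\emph{Remaining check.} The definition of the spectra ranges over infinite cardinals, and every uncountable $\kappa$ is infinite, so $\kappa$ lies in the range where Corollary~\ref{cor:spectrum-upward} applies. Beyond this bookkeeping there is no substantive obstacle; the argument is a direct specialization of the ZFC bounds under $\cc=\aleph_1$.
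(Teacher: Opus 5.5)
Your proposal is correct and matches the paper's proof. You collapse the ZFC bounds of Theorem~\ref{thm:zfc-bounds} using $\cc=\aleph_1$, then invoke Corollary~\ref{cor:spectrum-upward} for existence of $L_\kappa$ and Corollary~\ref{cor:product-strict} for the strict inclusion; your explicit choice $L_\kappa=L_0\times C_\kappa$, with $|L_\kappa|=\cc\cdot\kappa=\kappa$, is simply the construction from the proof of Corollary~\ref{cor:spectrum-upward} unpacked.
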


\begin{proof}
Under CH, $\cc=\aleph_1$, so Theorem~\ref{thm:zfc-bounds} gives the stated
equalities. Apply Corollary~\ref{cor:spectrum-upward} for the existence
of $L_\kappa$ and Corollary~\ref{cor:product-strict} for the strict
Scott-product inequality.
\end{proof}

\section{Artinian web spaces and meet-continuous dcpos}\label{sec:artinian}

The frame condition combines finite distributivity with meet continuity.
Before asking whether the countable-frame results extend to
meet-continuous complete lattices, we establish a positive result under
an independent order-theoretic restriction. No countability assumption
is needed in this section. The closed-set formulation of meet continuity
also leads to a result for $d$-spaces whose topology is not initially
assumed to be the Scott topology.

\subsection{Meet continuity and web spaces.}
For a subset $A$ of a poset $P$, write
${\downarrow} A=\{p\in P:p\leq a\text{ for some }a\in A\}$ and define
${\uparrow} A$ dually. For a $T_0$ space $X=(X,\tau)$, let $\Omega X$
denote its specialization order, given by
$x\leq_\tau y$ if and only if $x\in\overline{\{y\}}^{\tau}$.
Write $\Gamma(X)$ for the complete lattice of closed subsets of $X$,
ordered by inclusion. Its meets are intersections and its joins are
closures of unions. Upper and lower sets in a space always refer to
its specialization order.

Kou, Liu and Luo \citep{KouLiuLuo2003} extended meet continuity from
complete lattices to arbitrary dcpos. This definition does not
presuppose binary meets. We use its poset version, as studied by Mao
and Xu \citep{MaoXu2009}: a poset $P$ is \emph{meet-continuous} if, for
every directed subset $D\subseteq P$ whose supremum exists and every
$x\leq\bigvee D$,
\begin{equation}\label{eq:mc-closure}
  x\in\operatorname{cl}_{\sigma(P)}
       ({\downarrow} D\cap{\downarrow} x).
\end{equation}
A meet-continuous dcpo is also called a \emph{meet-continuous domain}.
The following characterization relates this order-theoretic condition
to complete Heyting algebras and to Scott-open sets.

\Needspace{10\baselineskip}
\begin{theorem}[Meet-continuity characterization]\label{thm:mc-characterization}
Let $P$ be a poset. The following conditions are equivalent:
\begin{enumerate}
\item[(1)] $P$ is meet-continuous;
\item[(2)] $\Gamma(\Sigma P)$ is a complete Heyting algebra;
\item[(3)] for every $x\in P$ and $U\in\sigma(P)$,
${\uparrow}({\downarrow} x\cap U)\in\sigma(P)$.
\end{enumerate}
If $P$ is a directed-complete meet-semilattice, these conditions are
also equivalent to
\begin{equation}\label{eq:mc-semilattice}
  x\wedge\bigvee D=\bigvee_{d\in D}(x\wedge d)
\end{equation}
for every $x\in P$ and every directed subset $D\subseteq P$.
\end{theorem}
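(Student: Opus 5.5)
I will prove the cycle (1)$\Rightarrow$(3)$\Rightarrow$(2)$\Rightarrow$(1) and then treat the semilattice case separately. Throughout I will use one reformulation of (3). For $U\in\sigma(P)$ the set ${\uparrow}({\downarrow}x\cap U)$ is always an upper set. So (3) says exactly this: whenever $D$ is directed with $\bigvee D$ existing and $y\le\bigvee D$ for some $y\in{\downarrow}x\cap U$, some $d\in D$ lies above an element of ${\downarrow}x\cap U$.

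\emph{(1)$\Rightarrow$(3).} Let $D$ be directed with supremum $s$, and let $y\in{\downarrow}x\cap U$ with $y\le s$. By \eqref{eq:mc-closure}, $y$ lies in the Scott closure of ${\downarrow}D\cap{\downarrow}y$. Since $U$ is a Scott-open neighbourhood of $y$, it meets ${\downarrow}D\cap{\downarrow}y$ in some $z$. Then $z\le y\le x$, $z\in U$, and $z\le d$ for some $d\in D$. Hence $d\in{\uparrow}({\downarrow}x\cap U)$.

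\emph{(3)$\Rightarrow$(1).} Let $x\le\bigvee D$ and take a Scott-open $U\ni x$. Apply (3) to the pair $x,U$, with $y=x$, and to the directed set $D$. This gives $d\in D$ and $z\in{\downarrow}x\cap U$ with $z\le d$. Thus $U$ meets ${\downarrow}D\cap{\downarrow}x$, which gives \eqref{eq:mc-closure}.

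\emph{(3)$\Leftrightarrow$(2).} In $\Gamma=\Gamma(\Sigma P)$, binary meet is intersection and arbitrary join is the closure of the union. $\Gamma$ is a complete Heyting algebra iff $A\cap\operatorname{cl}(\bigcup B_j)\subseteq\operatorname{cl}(\bigcup(A\cap B_j))$ for all closed $A,B_j$. Since $\Gamma$ is already distributive over finite joins, it suffices to check directed families. It further suffices to test $A={\downarrow}x$: a point $y$ of the left side lies in ${\downarrow}y\subseteq A$, and ${\downarrow}y$ is Scott closed. So (2) is equivalent to: ${\downarrow}x\cap\operatorname{cl}(C)\subseteq\operatorname{cl}({\downarrow}x\cap C)$ for every closed $x$-independent $C$ and every $x$.

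Now suppose (3) holds and $y\in{\downarrow}x\cap\operatorname{cl}(C)$. Let $U$ be a Scott-open neighbourhood of $y$, and put $V={\uparrow}({\downarrow}y\cap U)$. By (3), $V$ is Scott open, and it contains $y$. Hence $V$ meets $C$ at some $c$. Then $c\ge z$ for some $z\in{\downarrow}y\cap U$. Since $C$ is a lower set, $z\in C\cap{\downarrow}x\cap U$. So $y\in\operatorname{cl}({\downarrow}x\cap C)$, which is the required inclusion.

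Conversely, suppose (2) holds and $y\in{\downarrow}x\cap U$ with $y\le\bigvee D$. Take $C=\operatorname{cl}({\downarrow}D)$. Closures of lower sets of directed sets contain the existing sup, so $y\in{\downarrow}x\cap C$. The inclusion then gives $y\in\operatorname{cl}({\downarrow}x\cap{\downarrow}D)$. The main subtlety appears here: one must check that $\operatorname{cl}({\downarrow}x\cap{\downarrow}D)$ is met by $U$ at a point of ${\downarrow}x\cap{\downarrow}D$ itself, not merely at a limit point. This holds because a Scott-open set meets the closure of a set iff it meets the set. So some $z\in U\cap{\downarrow}x\cap{\downarrow}D$ exists, and it yields the required $d\in D$.

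\emph{Semilattice case.} Assume $P$ is a directed-complete meet-semilattice. If \eqref{eq:mc-semilattice} holds and $x\le\bigvee D$, then $x=\bigvee_d(x\wedge d)$. This is a directed supremum of elements of ${\downarrow}D\cap{\downarrow}x$, and Scott-closed sets are closed under directed suprema, so \eqref{eq:mc-closure} follows.

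Conversely, assume (1) and set $s=\bigvee_d(x\wedge d)$, which is clearly $\le x\wedge\bigvee D$. Apply \eqref{eq:mc-closure} to $x\wedge\bigvee D$: this point lies in the closure of ${\downarrow}D\cap{\downarrow}x$. That set is contained in the Scott-closed set ${\downarrow}s$, since $z\le d$ and $z\le x$ give $z\le x\wedge d\le s$. Hence $x\wedge\bigvee D\le s$, and the two are equal. I expect this last step, together with the reduction to principal $A$ in (2), to be the main points needing care, and both are routine.
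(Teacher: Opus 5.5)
Your argument is correct in substance, but the reformulation of (2) is misstated as written. If $C$ is closed, the inclusion ${\downarrow}x\cap\operatorname{cl}(C)\subseteq\operatorname{cl}({\downarrow}x\cap C)$ is trivial. Likewise, with $C=\operatorname{cl}({\downarrow}D)$ in (2)$\Rightarrow$(3), it only returns $y\in{\downarrow}x\cap\operatorname{cl}({\downarrow}D)$, which is your starting point, not $y\in\operatorname{cl}({\downarrow}x\cap{\downarrow}D)$. The correct reformulation quantifies over all \emph{lower} sets $C$; every lower set is the union of the Scott-closed sets ${\downarrow}c$, $c\in C$. In (2)$\Rightarrow$(3) you should take $C={\downarrow}D=\bigcup_{d\in D}{\downarrow}d$. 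That is, apply the frame law of $\Gamma(\Sigma P)$ to $A={\downarrow}x$ and the family $\{{\downarrow}d:d\in D\}$, using $\bigvee D\in\operatorname{cl}({\downarrow}D)$ to place $y$ on the left-hand side. Your (3)$\Rightarrow$(2) step already uses only that $C$ is a lower set, so nothing else changes.

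With that repair, your route differs from the paper's. The paper does not prove (1)$\Leftrightarrow$(2)$\Leftrightarrow$(3). It cites Kou--Liu--Luo and Mao--Xu for (1)$\Leftrightarrow$(2) and Ern\'e's web-space theorem for (3). It argues directly only for the semilattice clause, and there your argument is the paper's: the directed set $E=\{x\wedge d:d\in D\}$ with ${\downarrow}E={\downarrow}x\cap{\downarrow}D$ and Scott closure ${\downarrow}\bigvee E$.

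Your version is self-contained and elementary. Unpacking (3) as Scott-openness of an upper set makes (1)$\Leftrightarrow$(3) a short check. Reducing the frame law to principal $A={\downarrow}y$ shows that (3) is exactly the pointwise form of distributivity in $\Gamma(\Sigma P)$.

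Your (3)$\Rightarrow$(2) uses only that open sets are upper sets and ${\downarrow}y=\operatorname{cl}\{y\}$. It therefore works verbatim for any $T_0$ space satisfying \eqref{eq:web-open}. Your (2)$\Rightarrow$(3), by contrast, relies on directed suprema and is specific to Scott topologies. The citation of Ern\'e supplies the general direction ``$\Gamma(X)$ a frame $\Rightarrow$ web space''. The paper later needs this for $d$-spaces not assumed to carry the Scott topology (Theorem~\ref{thm:artinian-dspace}).
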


\begin{proof}[Reference]
The equivalence of (1) and (2) for dcpos is due to Kou, Liu and Luo
\citep{KouLiuLuo2003}; for arbitrary posets see
\citet[Theorem~3.8]{MaoXu2009}. The open-set formulation (3) is also
obtained from Ern\'e's characterization of web spaces
\citep[Theorem~3]{Erne2009}; see the formulation below.
For the last assertion, put $E=\{x\wedge d:d\in D\}$. Then $E$ is
directed and ${\downarrow} E={\downarrow} x\cap{\downarrow} D$, so its Scott
closure is ${\downarrow}\bigvee E$. If $P$ is meet-continuous, apply
\eqref{eq:mc-closure} to $y=x\wedge\bigvee D$. Since $y\leq x$, this
gives $y\leq\bigvee E$; the reverse inequality is immediate. Thus
\eqref{eq:mc-semilattice} holds. Conversely, when $x\leq\bigvee D$,
this identity gives $\bigvee E=x$, proving \eqref{eq:mc-closure}.
\end{proof}

More generally, Ern\'e's characterization
\citep[Theorem~3]{Erne2009} says that a $T_0$ space $(X,\tau)$ is a
\emph{web space} if and only if
\begin{equation}\label{eq:web-open}
  U\in\tau,~ x\in U
  ~\Longrightarrow~
  {\uparrow}(U\cap{\downarrow} x)\in\tau.
\end{equation}
Equivalently, $\Gamma(X)$ is a complete Heyting algebra. In the Scott
case the condition for $x\notin U$ is automatic, since then
${\downarrow} x\cap U=\varnothing$. Thus
Theorem~\ref{thm:mc-characterization} can also be read as saying that
$P$ is meet-continuous precisely when $\Sigma P$ is a web space.
For complete lattices, \eqref{eq:mc-semilattice} recovers the directed
distributive law in Section~\ref{sec:prelim}.

\subsection{Artinian spaces and algebraic domains.}
A poset is \emph{Artinian}, also called \emph{dual Noetherian}, if every
nonempty subset has a minimal element; in ZFC this is equivalent to
the descending chain condition. Notice that a minimal element need
not be a least element.

An \emph{open core} is an open principal upset. Following Ern\'e
\citep[Section~5]{Erne2009}, a space with a base of open cores is called
a \emph{$B$-space}, or a \emph{supercompactly based space}. Put
\[
  K_\tau(X)=\{k\in X:{\uparrow} k\in\tau\}.
\]
These are the $\tau$-compact points; for Scott spaces of dcpos they
are precisely the compact elements of the order.

\begin{theorem}\label{thm:artinian-web}
Every Artinian $T_0$ web space is a $B$-space. More explicitly, if
$(X,\tau)$ is a $T_0$ web space whose specialization order is Artinian,
then for every $x\in U\in\tau$ there is $k\in K_\tau(X)$ such that
\begin{equation}\label{eq:artinian-core}
  k\leq_\tau x,~ x\in{\uparrow} k\subseteq U.
\end{equation}
\end{theorem}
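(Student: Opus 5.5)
The plan is to obtain $k$ directly as a minimal element of a suitable subset of $U$, and then use the web-space condition \eqref{eq:web-open} at the point $k$ itself to see that ${\uparrow} k$ is open. Fix $x\in U\in\tau$ and consider
\[
  S=U\cap{\downarrow} x ,
\]
where ${\downarrow}$ refers to the specialization order $\leq_\tau$. Since $x\in S$, the set $S$ is nonempty. Because $\Omega X$ is Artinian, $S$ has a minimal element $k$. By construction $k\in U$ and $k\leq_\tau x$.

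Next I will show that $U\cap{\downarrow} k=\{k\}$. Let $y\in U\cap{\downarrow} k$. Then $y\leq_\tau k\leq_\tau x$, so $y\in S$, and minimality of $k$ in $S$ forces $y=k$. This uses only that $\leq_\tau$ is a partial order, which holds because $X$ is $T_0$. Now apply Ern\'e's characterization \eqref{eq:web-open} to the open set $U$ and the point $k\in U$. This gives ${\uparrow}(U\cap{\downarrow} k)\in\tau$, that is, ${\uparrow} k\in\tau$. Hence $k\in K_\tau(X)$.

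It remains to check \eqref{eq:artinian-core}. First, $x\in{\uparrow} k$ because $k\leq_\tau x$. Second, ${\uparrow} k\subseteq U$ because $k\in U$ and every open set is an upper set in the specialization order. Thus for every open $U$ and every $x\in U$ there is an open core ${\uparrow} k$ with $x\in{\uparrow} k\subseteq U$. So the open cores form a base, and $X$ is a $B$-space.

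There is no serious obstacle in this argument. The one point needing care is the choice of set to minimize. It must be $U\cap{\downarrow} x$ rather than all of $U$: a minimal element of $U$ need not lie below $x$, while minimizing inside ${\downarrow} x$ is exactly what makes $U\cap{\downarrow} k$ collapse to $\{k\}$, so that the web condition produces the principal upset ${\uparrow} k$.
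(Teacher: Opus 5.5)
Your proposal is correct and follows essentially the same route as the paper's proof: take a minimal element $k$ of $U\cap{\downarrow} x$, observe that $U\cap{\downarrow} k=\{k\}$, and apply the web condition \eqref{eq:web-open} at $k$ to get ${\uparrow} k\in\tau$. The conclusion $x\in{\uparrow} k\subseteq U$ then follows because open sets are upper sets in the specialization order.
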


\begin{proof}
Fix $x\in U\in\tau$. The set $U\cap{\downarrow} x$ is nonempty, so
Artinianity provides a minimal element $k$ of this set. Then
$U\cap{\downarrow} k=\{k\}$: if $z\in U$ and $z\leq_\tau k$, then
$z\in U\cap{\downarrow} x$ and minimality forces $z=k$.
Applying \eqref{eq:web-open} at $k$ gives
\[
  {\uparrow} k={\uparrow}(U\cap{\downarrow} k)\in\tau.
\]
Since $k\leq_\tau x$ and $U$ is an upper set containing $k$, we have
$x\in{\uparrow} k\subseteq U$. Thus open cores form a base.
\end{proof}

Recall that $k$ in a dcpo $P$ is \emph{compact} if, for every directed
$D$ with $k\leq\bigvee D$, there exists $d\in D$ such that $k\leq d$.
Equivalently, ${\uparrow} k$ is Scott open. Write $K(P)$ for the compact
elements of $P$. A dcpo is an \emph{algebraic domain} if
$K(P)\cap{\downarrow} x$ is directed with supremum $x$ for every $x\in P$.
A $T_0$ space $X=(X,\tau)$ is a \emph{$d$-space}, or a
\emph{monotone convergence space}, if $\Omega X$ is a dcpo and
$\tau\subseteq\sigma(\Omega X)$; see \citet{GierzEtAl2003}.
In particular, every Scott space of a dcpo is a $d$-space.

\begin{theorem}\label{thm:artinian-dspace}
Let $X=(X,\tau)$ be a $d$-space. Suppose that $\Gamma(X)$ is a
complete Heyting algebra and that $\Omega X$ is Artinian. Then
$\Omega X$ is an algebraic dcpo and
\begin{equation}\label{eq:dspace-scott}
  \tau=\sigma(\Omega X).
\end{equation}
Consequently, $X$ is sober.
\end{theorem}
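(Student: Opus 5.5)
We argue through Theorem~\ref{thm:artinian-web}. Since $\Gamma(X)$ is a complete Heyting algebra, $X$ is a web space by Ern\'e's characterization \eqref{eq:web-open}. Its specialization order is Artinian by hypothesis. Theorem~\ref{thm:artinian-web} therefore says that for every $x\in U\in\tau$ there is $k\in K_\tau(X)$ with $k\leq_\tau x$ and ${\uparrow}k\subseteq U$. The plan has four steps: identify $K_\tau(X)$ with $K(\Omega X)$, prove algebraicity, deduce \eqref{eq:dspace-scott}, and conclude sobriety from the standard fact that Scott spaces of algebraic (indeed continuous) dcpos are sober \citep{GierzEtAl2003}.

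\emph{Step 1: compact points.} Let $k\in K_\tau(X)$. Then ${\uparrow}k\in\tau\subseteq\sigma(\Omega X)$, because $X$ is a $d$-space, so $k$ is compact in the dcpo $\Omega X$. Hence $K_\tau(X)\subseteq K(\Omega X)$. We do not yet need the reverse inclusion.

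\emph{Step 2: algebraicity.} Fix $x\in X$ and put $A=K_\tau(X)\cap{\downarrow}x$. Taking $U=X$ in Theorem~\ref{thm:artinian-web} shows that $A\neq\varnothing$. For directedness, take $k_1,k_2\in A$. Then $x\in{\uparrow}k_1\cap{\uparrow}k_2\in\tau$, so the theorem gives $k\in A$ with ${\uparrow}k\subseteq{\uparrow}k_1\cap{\uparrow}k_2$, and this $k$ is an upper bound of $k_1,k_2$ in $A$. Let $s=\bigvee A$, which exists since $\Omega X$ is a dcpo; clearly $s\leq x$. Suppose $x\not\leq s$. Then $x\notin\overline{\{s\}}={\downarrow}s$, so $U=X\setminus{\downarrow}s$ is a $\tau$-open set containing $x$. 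The theorem gives $k\in A$ with ${\uparrow}k\subseteq U$, which is impossible because $k\leq s$. Hence $x=\bigvee A$. Since $A\subseteq K(\Omega X)\cap{\downarrow}x$ and $A$ is cofinal in it with supremum $x$, this is enough for algebraicity. We can do better: if $c\in K(\Omega X)\cap{\downarrow}x$, compactness of $c$ applied to the directed set $A$ with $c\leq\bigvee A$ yields $k\in A$ with $c\leq k$. So $K(\Omega X)\cap{\downarrow}x$ is directed, being the downward closure of a directed set inside the compact elements, and its supremum is $x$. Thus $\Omega X$ is algebraic.

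\emph{Step 3: the topology.} We already have $\tau\subseteq\sigma(\Omega X)$. For the converse, recall that in an algebraic domain the sets ${\uparrow}c$ with $c\in K(\Omega X)$ form a base of $\sigma(\Omega X)$. By Step 2, $c$ lies below some $k\in A$ for $x=c$, so $c\leq k\leq c$ and $c=k\in K_\tau(X)$. Here the identification $K(\Omega X)=K_\tau(X)$ comes out of the argument: each compact $c$ equals $\bigvee(K_\tau(X)\cap{\downarrow}c)$, and compactness forces $c$ to be attained. Hence ${\uparrow}c\in\tau$, and $\sigma(\Omega X)\subseteq\tau$.

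The main obstacle is Step 2. Only the $\tau$-compact points come directly from Theorem~\ref{thm:artinian-web}, and everything depends on showing that they approximate each $x$ in the dcpo sense. This is where the $d$-space hypothesis does its work: the supremum $s$ must exist, and ${\downarrow}s$ must be $\tau$-closed. Steps 3 and 4 are then formal.
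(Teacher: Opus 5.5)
Your proof is correct and follows the paper's argument: you show that $K_\tau(X)\cap{\downarrow}x$ is nonempty, directed, and has supremum $x$ using the $\tau$-closed set ${\downarrow}s$, exactly as the paper does. The only difference is the order of the last steps. You first identify $K(\Omega X)=K_\tau(X)$ by compactness against that directed set and then use the standard compact-element base of an algebraic domain, whereas the paper obtains $\sigma(\Omega X)\subseteq\tau$ directly from Scott openness against $B_x$ and reads off algebraicity afterwards.
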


\begin{proof}
Put $P=\Omega X$. Since $X$ is a $d$-space, $P$ is a dcpo and
$\tau\subseteq\sigma(P)$. The assumption on $\Gamma(X)$ makes $X$
a web space. Theorem~\ref{thm:artinian-web} therefore supplies a base
of open cores.

For $x\in X$, put $B_x=K_\tau(X)\cap{\downarrow} x$. Applying the
basis property to the neighborhood $X$ shows that $B_x$ is nonempty.
If $k_1,k_2\in B_x$, apply it to the open neighborhood
${\uparrow} k_1\cap{\uparrow} k_2$ of $x$. There is $k_3\in B_x$ with
\[
  x\in{\uparrow} k_3\subseteq{\uparrow} k_1\cap{\uparrow} k_2,
\]
so $k_1,k_2\leq_\tau k_3$. Thus $B_x$ is directed.

Let $s=\bigvee B_x\leq_\tau x$. If $s\ne x$, the set
$X\setminus{\downarrow} s$ is a $\tau$-open neighborhood of $x$,
because ${\downarrow} s=\overline{\{s\}}^{\tau}$. Its basis refinement
gives $k\in B_x$ with
${\uparrow} k\subseteq X\setminus{\downarrow} s$, contrary to $k\leq_\tau s$.
Hence
\[
  x=\bigvee B_x.
\]

Let $V\in\sigma(P)$ and $x\in V$. Since $B_x$ is directed with
supremum $x$, some $k\in B_x$ lies in $V$. Then
$x\in{\uparrow} k\subseteq V$ and ${\uparrow} k\in\tau$. Thus every
Scott-open set is $\tau$-open, proving \eqref{eq:dspace-scott}.
It follows that $K_\tau(X)=K(P)$, so the preceding directed-supremum
formula says exactly that $P$ is algebraic. Finally, Scott spaces of
algebraic dcpos are sober \citep{GierzEtAl2003}, and hence so is $X$.
\end{proof}

\begin{corollary}\label{cor:artinian-mc}
Every Artinian meet-continuous dcpo $P$ is an algebraic domain.
Consequently, $\Sigma P$ is sober and
\begin{equation}\label{eq:artinian-square}
  \sigma(P\times P)=\sigma(P)\times\sigma(P).
\end{equation}
In particular, every Artinian meet-continuous complete lattice is
algebraic.
\end{corollary}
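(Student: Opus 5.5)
The plan is to apply Theorem~\ref{thm:artinian-dspace} to the Scott space $X=\Sigma P$. Since $P$ is a dcpo, $\Sigma P$ is a $d$-space. Its specialization order is exactly the original order of $P$, because the closure of $\{y\}$ in the Scott topology is ${\downarrow}y$. Hence $\Omega X=P$, which is Artinian by hypothesis. Meet continuity of $P$ gives, via the equivalence (1)$\Leftrightarrow$(2) of Theorem~\ref{thm:mc-characterization}, that $\Gamma(\Sigma P)$ is a complete Heyting algebra. Theorem~\ref{thm:artinian-dspace} then yields that $P=\Omega X$ is an algebraic dcpo, with $\tau=\sigma(P)$ automatically, and that $\Sigma P$ is sober. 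So the first two assertions follow almost directly; the only point to check is the identification of the specialization order with the given order.

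For \eqref{eq:artinian-square}, I would use the standard domain-theoretic fact that $\Sigma(P\times P)=\Sigma P\times\Sigma P$ whenever $P$ is a continuous dcpo; see \citep{GierzEtAl2003}. Algebraic domains are continuous, so this applies. If a self-contained argument is preferred, I would argue directly from algebraicity.

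\begin{itemize}
\item[(a)] Let $W$ be Scott open in $P\times P$ and $(x,y)\in W$.
\item[(b)] The sets $B_x=K(P)\cap{\downarrow}x$ and $B_y=K(P)\cap{\downarrow}y$ are directed with suprema $x$ and $y$.
\item[(c)] Then $B_x\times B_y$ is directed in $P\times P$ with supremum $(x,y)$, so some pair $(k,l)$ with $k\in B_x$, $l\in B_y$ lies in $W$.
\item[(d)] Since $W$ is an upper set, ${\uparrow}k\times{\uparrow}l\subseteq W$. Both ${\uparrow}k$ and ${\uparrow}l$ are Scott open because $k,l$ are compact.
\end{itemize}

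This shows $W$ is product-open. The reverse inclusion is \eqref{eq:general-inclusion}.

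For the final sentence, note that a complete lattice is a dcpo. An Artinian meet-continuous complete lattice is therefore an algebraic domain, and a complete lattice that is an algebraic domain is by definition an algebraic lattice. Here ``meet-continuous'' in the lattice sense (the directed distributive law) agrees with the poset sense, by the last assertion of Theorem~\ref{thm:mc-characterization}. I expect no real obstacle. The step needing the most care is the bookkeeping that the two notions of meet continuity coincide for complete lattices, together with the identification $\Omega(\Sigma P)=P$; everything else is a direct invocation of Theorem~\ref{thm:artinian-dspace}.
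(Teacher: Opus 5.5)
Your proposal is correct and follows essentially the same route as the paper. You use Theorem~\ref{thm:mc-characterization} to make $\Gamma(\Sigma P)$ a complete Heyting algebra, apply Theorem~\ref{thm:artinian-dspace} to the $d$-space $\Sigma P$, and prove the product identity by picking $(k,l)\in W$ from the directed set $(K(P)\cap{\downarrow}x)\times(K(P)\cap{\downarrow}y)$. Your extra checks, that $\Omega(\Sigma P)=P$ and that the lattice and poset notions of meet continuity agree for complete lattices, are left implicit in the paper but are exactly what is needed.
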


\begin{proof}
By Theorem~\ref{thm:mc-characterization}, $\Gamma(\Sigma P)$ is a
complete Heyting algebra. Since $\Sigma P$ is a $d$-space,
Theorem~\ref{thm:artinian-dspace} gives algebraicity and sobriety.

For the product identity, put $K_x=K(P)\cap{\downarrow} x$ for each
$x\in P$. Let $W\in\sigma(P\times P)$ contain $(x,y)$.
The directed set $K_x\times K_y$ has supremum $(x,y)$, so some
$(k,l)\in K_x\times K_y$ belongs to $W$. Then
${\uparrow} k\times{\uparrow} l$ is a product-open neighborhood of $(x,y)$
contained in $W$. This proves the nontrivial inclusion in
\eqref{eq:artinian-square}; the reverse inclusion is
\eqref{eq:general-inclusion}.
\end{proof}

Taking the contrapositive of the sobriety assertion gives the following
restriction on every dcpo with a non-sober Scott space.

\begin{theorem}[Dichotomy theorem for Scott non-sober dcpos]\label{thm:dichotomy}
Let $P$ be a dcpo whose Scott space $\Sigma P$ is non-sober.
Then at least one of the following holds:
\begin{enumerate}
\item[(1)] $P$ is not meet-continuous;
\item[(2)] $P$ is not Artinian.
\end{enumerate}
\end{theorem}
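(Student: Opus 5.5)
The plan is to argue by contraposition, reducing the statement directly to Corollary~\ref{cor:artinian-mc}. Suppose, for a contradiction, that $P$ is a dcpo with non-sober Scott space $\Sigma P$, yet $P$ is both meet-continuous and Artinian. Under these two hypotheses Corollary~\ref{cor:artinian-mc} applies verbatim. It says that $P$ is an algebraic domain and that $\Sigma P$ is sober, which contradicts the assumption. Hence at least one of (1) and (2) must hold.

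For completeness, I would spell out which ingredients the corollary depends on, so the reader sees where each hypothesis is used.

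\emph{Meet continuity.} By Theorem~\ref{thm:mc-characterization}, meet continuity of $P$ is equivalent to $\Gamma(\Sigma P)$ being a complete Heyting algebra. By Ern\'e's characterization \eqref{eq:web-open}, this makes $\Sigma P$ a web space.

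\emph{Being a $d$-space.} $\Sigma P$ is a $d$-space because $P$ is a dcpo and the specialization order of a Scott space is the original order.

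\emph{Artinianity.} Artinianity of $P$ is precisely Artinianity of $\Omega\Sigma P$.

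With these three facts, Theorem~\ref{thm:artinian-dspace} gives algebraicity and then sobriety, via the standard fact that Scott spaces of algebraic dcpos are sober \citep{GierzEtAl2003}.

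There is essentially no obstacle, since the work has already been done in Theorem~\ref{thm:artinian-web} and Theorem~\ref{thm:artinian-dspace}. The only point requiring care is the bookkeeping check that the specialization order of $\Sigma P$ coincides with the order of $P$. This holds because the Scott closure of $\{y\}$ is ${\downarrow}y$, so the Artinian hypothesis on $P$ transfers to the space. I would also remark that the disjunction is inclusive, so the statement does not claim that exactly one condition fails.
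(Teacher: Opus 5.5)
Your proof is correct and is exactly the paper's argument: the theorem is proved as the contrapositive of the sobriety assertion in Corollary~\ref{cor:artinian-mc}. Your unpacking of that corollary through Theorem~\ref{thm:mc-characterization}, the $d$-space property of $\Sigma P$, and Theorem~\ref{thm:artinian-dspace} also matches how the paper proves the corollary.
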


\begin{proof}
If $P$ were both meet-continuous and Artinian, then
Corollary~\ref{cor:artinian-mc} would make $\Sigma P$ sober.
\end{proof}

The two alternatives are not mutually exclusive. In particular,
non-sobriety alone cannot be used to conclude failure of meet continuity:
Artinianity is needed for that deduction from this theorem. Conversely,
for a meet-continuous dcpo, non-sobriety forces an infinite strictly
descending chain.

\subsection{Seven non-sober examples in the light of the dichotomy.}
We distinguish consequences of Theorem~\ref{thm:dichotomy} from
properties of the individual constructions.

\emph{The Artinian examples.}
Johnstone's dcpo \citep{Johnstone1981} and Jia's countable dcpo
\citep[Example~2.6.1]{Jia2018} are Artinian. In either construction, a
strictly descending chain has, after at most one step, fixed column
labels and a strictly decreasing natural-number height. Since their
Scott spaces are non-sober, Theorem~\ref{thm:dichotomy} shows that
neither dcpo is meet-continuous.

For the extended Johnstone dcpo of Zhao, Xi and Chen
\citep{ZhaoXiChen2019}, the same observation takes an ordinal form.
In its coordinate description,
$\mathcal Z=\omega_1\times(\omega_1+1)$ and
\begin{equation}\label{eq:extended-johnstone-order}
\begin{split}
  (\alpha,\beta)\leq(\gamma,\delta)
  ~\Longleftrightarrow~
  &(\alpha=\gamma\text{ and }\beta\leq\delta)\\
  &\text{or }(\delta=\omega_1\text{ and }\beta\leq\gamma).
\end{split}
\end{equation}
The map $r(\alpha,\beta)=\beta$ is strictly increasing on strict
comparisons: $x<y$ implies $r(x)<r(y)$. Indeed, a strict comparison in
one column increases the second coordinate, while a comparison between
different columns ends at height $\omega_1$ and starts below it.
For any nonempty $A\subseteq\mathcal Z$, choose $a\in A$ whose rank is
least in $r[A]$. Then $a$ is minimal in $A$. Thus $\mathcal Z$ is
Artinian, using only the well-ordering of the ordinals, not the
regularity of $\omega_1$. Its non-sober Scott space and
Theorem~\ref{thm:dichotomy} imply that $\mathcal Z$ is not meet-continuous.

The countable distributive complete lattice of Miao, Xi, Li and Zhao
\citep{MiaoXiLiZhao2023} is also Artinian and has a non-sober Scott
space, so the same theorem applies. For this lattice there is an
independent deduction within the present paper: meet continuity together
with finite distributivity would make it a countable frame, contrary to
Corollary~\ref{cor:countable-sober}.

\emph{Isbell's complete lattice.}
We next include Isbell's lattice \citep{Isbell1982} in the Artinian
class. We use Goubault-Larrecq's detailed account
\citep{GoubaultLarrecqIsbell}, where well-foundedness of the finite
intersections of principal ideals is already noted. The following
argument makes the passage to arbitrary intersections explicit. That
account uses positive real numbers in place of Isbell's ordinal indices;
the proof below depends only on the block structure and the injectivity
of the attaching map, and applies to either indexing.

\begin{proposition}\label{prop:isbell-artinian}
Isbell's complete lattice is Artinian and is not meet-continuous.
\end{proposition}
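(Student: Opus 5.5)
The plan has two halves. First I establish Artinianity directly from Isbell's construction. Then non-meet-continuity follows from Theorem~\ref{thm:dichotomy}, since Isbell's lattice $\mathcal I$ is a complete lattice, hence a dcpo, whose Scott space is non-sober \citep{Isbell1982,GoubaultLarrecqIsbell}.

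For Artinianity I use the description in \citep{GoubaultLarrecqIsbell}. There $\mathcal I$ is the family of intersections of principal ideals (equivalently, of Scott-closed sets of a suitable type) of an underlying poset $Q$, ordered by inclusion. In that description $Q$ is a union of blocks, each block is a chain-like or finite structure, and the blocks are attached to one another through an injective attaching map. The key finiteness fact, already noted there, is that finite intersections of principal ideals form a well-founded family: there is no infinite strictly descending chain among them.

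The step I expect to be the main obstacle is passing from finite to arbitrary intersections. Suppose $A_0\supsetneq A_1\supsetneq\cdots$ is a strictly descending chain in $\mathcal I$, where each $A_n=\bigcap_{j\in J_n}{\downarrow} q_j$. I would show that each element of $\mathcal I$ is in fact already a finite intersection ${\downarrow} q_1\cap\dots\cap{\downarrow} q_m$, or else is the top or bottom element.

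The argument I would try first is as follows. Because the attaching map is injective, two principal ideals from different blocks intersect in at most one attached point, or in a set determined by a single block. An arbitrary intersection is therefore either contained in one block, or is a singleton ideal, or is empty. Inside one block, the ideals are indexed by a well-ordered or finite index set. Hence the intersection of a family of ideals is the ideal of the least index, which is attained, so the intersection is again principal. This gives the reduction to finite intersections, and the well-foundedness noted in \citep{GoubaultLarrecqIsbell} then rules out infinite descending chains. Equivalently, every nonempty subset has a minimal element. I would phrase this in terms of a rank function, as in the treatment of the extended Johnstone dcpo above. The rank is the pair consisting of the block label and the height within the block, and I would check that strict inclusions strictly decrease it lexicographically. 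Indexing by ordinals or by positive reals does not matter, because only the block structure and the injectivity of the attaching map are used.

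Finally, the lattice is non-sober and Artinian, so Theorem~\ref{thm:dichotomy} forces alternative (1): $\mathcal I$ is not meet-continuous. As a cross-check, I would also look for an explicit directed family $D$ and an element $a$ with $a\wedge\bigvee D\neq\bigvee_{d\in D}(a\wedge d)$. This is not needed for the proof.
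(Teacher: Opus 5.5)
Your derivation of non-meet-continuity from Theorem~\ref{thm:dichotomy} matches the paper. However, the step you flag as the main obstacle, passing from finite to arbitrary intersections, rests on claims that are false for Isbell's poset $P$.

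A block $G_\alpha$ is not chain-like, and its principal ideals are far from linearly ordered. For instance ${\downarrow}1_1^\alpha\cap{\downarrow}n_f^\alpha=\varnothing$, and there is a separate $f$-column for every map $f\colon\N_{>0}\to\N_{>0}$. So these ideals are not indexed by a well-ordered set, and ``the ideal of the least index'' has no meaning.

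Nor is an arbitrary intersection contained in one block, or a singleton, or empty. Already $C_\gamma={\downarrow}\Omega_\gamma$ contains finite column segments from up to two other blocks. An intersection such as $C_\gamma\cap C_\delta$ is a finite union of initial column segments, which in general neither lies in one block nor has a greatest element. So ``the intersection is again principal'' fails. The conclusion you actually need is nonetheless true: every element of $L$ is finite, some $W_{\alpha,k}$, some $C_\alpha$, or $P$.

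Your lexicographic rank (block label, height) does not repair this, for three reasons. Sets meeting several blocks have no single label. Injectivity of the attaching map does not make labels monotone under inclusion. And with the real-number indexing you explicitly allow, the rank takes values in a non-well-founded order, so its strict decrease along a chain proves nothing.

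The missing idea is that no structural description of arbitrary intersections is needed. Well-foundedness of the family $\mathcal F$ of finite intersections of principal ideals already performs the reduction. For $S\subseteq P$, let $\mathcal F_S$ be the family of intersections $\bigcap_{p\in E}{\downarrow}p$ over finite $E\subseteq S$. It is nonempty and closed under binary intersections. Since $\mathcal F$ is Artinian, $\mathcal F_S$ has a minimal member $A_0$, and minimality gives $A_0\cap B=A_0$ for every $B\in\mathcal F_S$. Hence $A_0$ is the least member of $\mathcal F_S$ and equals $\bigcap_{p\in S}{\downarrow}p$, so $L=\mathcal F$ and $L$ is Artinian.

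If you want to prove rather than cite the well-foundedness of $\mathcal F$, use a label-free rank as the paper does. Classify finite intersections as finite sets, sets $W_{\alpha,k}$, sets $C_\alpha$, and $P$, and assign $|A|$, $\omega+k$, $\omega\cdot2$, $\omega\cdot2+1$ respectively. This rank is strictly increasing under proper inclusion, because distinct $C_\alpha$ are incomparable and $W$-sets are nested only within one block.
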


\begin{proof}
Write $P$ for the dcpo before completion. In each block $G_\alpha$,
there are columns $(m_k^\alpha)_{m\geq1}$ indexed by positive integers
$k$, with respective suprema $\omega_k^\alpha$, and columns
$(n_f^\alpha)_{n\geq1}$ indexed by maps $f:\N_{>0}\to\N_{>0}$.
The elements $\omega_1^\alpha<\omega_2^\alpha<\cdots$ have supremum
$\Omega_\alpha$, which is also the supremum of each $f$-column.
The two families of columns have no comparisons with each other below
$\Omega_\alpha$. Relations between distinct blocks only put finite
initial segments of columns below a maximal element $\Omega_\gamma$.
By injectivity of the attaching map, at most two such segments, from
distinct other blocks, occur below any given $\Omega_\gamma$.
These are the structural properties needed from the construction
\citep{Isbell1982,GoubaultLarrecqIsbell}.

Put $W_{\alpha,k}={\downarrow}\omega_k^\alpha$ and
$C_\alpha={\downarrow}\Omega_\alpha$. Principal ideals generated by
column elements $m_k^\alpha$ or $n_f^\alpha$ are finite.
Each $W_{\alpha,k}$ lies within $G_\alpha$, and
$C_\alpha\setminus G_\alpha$ is finite. Hence the intersection of
distinct $C_\alpha$'s is finite; the intersection of a $C_\alpha$
with a $W_{\beta,k}$ is either $W_{\beta,k}$ or finite; and two
$W$-sets either belong to the same block and are nested, or are disjoint.
Consequently every finite intersection of principal ideals, including
the empty intersection, is finite, a $W_{\alpha,k}$, a $C_\alpha$, or $P$.
Let $\mathcal F$ denote the family of these finite intersections.
The ordinal-valued map
\begin{equation}\label{eq:isbell-rank}
  \rho(A)=
  \begin{cases}
    |A|,&\text{if }A\text{ is finite},\\
    \omega+k,&\text{if }A=W_{\alpha,k},\\
    \omega\cdot2,&\text{if }A=C_\alpha,\\
    \omega\cdot2+1,&\text{if }A=P
  \end{cases}
\end{equation}
is strictly increasing under proper inclusion. Indeed, distinct
$C_\alpha$'s are incomparable, and proper inclusion between $W$-sets
can occur only within one block, with increasing $k$. Thus
$(\mathcal F,\subseteq)$ is Artinian.

The completed lattice, ordered by inclusion, is
\[
  L=\left\{\bigcap_{p\in S}{\downarrow}p:S\subseteq P\right\}.
\]
Fix $S\subseteq P$ and consider
\[
  \mathcal F_S=\left\{\bigcap_{p\in E}{\downarrow}p:
               E\subseteq S\text{ finite}\right\}.
\]
This family is nonempty and closed under binary intersections, and so
has a minimal member $A_0$. For every $B\in\mathcal F_S$,
minimality gives $A_0\cap B=A_0$. Hence $A_0$ is the least member of
$\mathcal F_S$, and
\[
  \bigcap_{p\in S}{\downarrow}p=\bigcap\mathcal F_S=A_0.
\]
Thus $L=\mathcal F$ is Artinian. Since $\Sigma L$ is non-sober,
Theorem~\ref{thm:dichotomy} shows that $L$ is not meet-continuous.

There is also a direct witness to the failure of meet continuity.
Fix one block $\alpha$ and one map $f$, and set
$a={\downarrow}1_1^\alpha$ and $b_n={\downarrow}n_f^\alpha$
for $n\geq1$. The set $\{b_n:n\geq1\}$ is directed in $L$,
$a\cap b_n=\varnothing$, and $\bigvee_{n\geq1}b_n=C_\alpha$.
For the last equality, any principal ideal containing every
$n_f^\alpha$ must contain their supremum $\Omega_\alpha$ in $P$;
the same holds for any intersection of such principal ideals.
As meets in $L$ are intersections and $0_L=\varnothing$, we obtain
\begin{equation}\label{eq:isbell-mc-failure}
  a\wedge\bigvee_{n\geq1}b_n
  =a>0_L=\bigvee_{n\geq1}(a\wedge b_n).
\end{equation}
\end{proof}

\emph{The Xu--Xi--Zhao frame.}
The complete Heyting algebra constructed in \citet{XuXiZhao2021} is
meet-continuous and has a non-sober Scott space. Hence
Theorem~\ref{thm:dichotomy} forces it to be non-Artinian.
It is also uncountable by Corollary~\ref{cor:countable-sober}.
Thus this example does not answer the countable meet-continuous
sobriety question.

Kou's example requires a separate check and also shows why the
alternatives in Theorem~\ref{thm:dichotomy} may overlap.

\begin{example}[Kou's dcpo]\label{ex:kou}
The dcpo constructed by Kou \citep{Kou2001} has a well-filtered,
non-sober Scott space. We use its explicit description in
\citet[Example~3.11]{ZhaoXu2018}. Let
\[
  X=(0,1],~
  P_0=\{(k,a,b)\in\mathbb R^3:0<k<1,\ 0<b\leq a\leq1\},
\]
and let $P=X\sqcup P_0$. Distinct elements of $X$ are incomparable,
and the remaining comparisons are
\begin{align*}
  (k,a,b)\leq(k',a',b')
    &\ \Longleftrightarrow\ k\leq k',\ a=a',\ b=b',\\
  (k,a,b)\leq x\in X
    &\ \Longleftrightarrow\ a=x\ \text{or}\ kb\leq x<b.
\end{align*}
Consider
\[
  D=\{(t,1,1):0<t<1\},~ u=(1/2,1,1/2).
\]
The set $D$ is a chain with supremum $1\in X$. Indeed, it has no upper
bound in $P_0$, and the only point of $X$ above every member of $D$ is
$1$. Although $u\leq1$, the two lower sets are disjoint:
\[
  {\downarrow} D\cap{\downarrow} u=\varnothing.
\]
This follows because comparisons inside $P_0$ preserve the two labels
$(a,b)$, and no point of $X$ lies below an element of $P_0$.
Condition~\eqref{eq:mc-closure} fails, so $P$ is not meet-continuous.
Moreover,
\[
  (1/2,1,1)>(1/3,1,1)>(1/4,1,1)>\cdots
\]
is a strictly descending chain. Thus Kou's dcpo is also non-Artinian.
\end{example}

Thus the Johnstone, Jia, Zhao--Xi--Chen, Miao--Xi--Li--Zhao and Isbell
examples are Artinian and therefore not meet-continuous. The
Xu--Xi--Zhao frame is meet-continuous but non-Artinian, while Kou's
dcpo fails both conditions. In particular, the other six examples all
fail meet continuity, so none supplies a countable meet-continuous
counterexample to the sobriety questions below.

\section{Further questions}\label{sec:questions}

There are now two positive results to compare: countable frames have
sober Scott spaces and satisfy the Scott-product identity, while
Artinian meet-continuous dcpos have both properties by
Corollary~\ref{cor:artinian-mc}. The first result uses finite
distributivity in addition to meet continuity; the second uses the
descending chain condition but imposes no cardinality restriction.
It is therefore natural to ask whether countability and meet continuity
alone suffice for the two conclusions in the complete-lattice case.

\begin{question}\label{ques:countable-mc-product}
Let $L$ be a countable meet-continuous complete lattice. Must
\[
  \sigma(L\times L)=\sigma(L)\times\sigma(L)?
\]
\end{question}

\begin{question}\label{ques:countable-mc-sober}
Let $L$ be a countable meet-continuous complete lattice. Must its Scott
space $\Sigma L$ be sober?
\end{question}

By Proposition~\ref{prop:joint-sup-sober}, an affirmative answer to
Question~\ref{ques:countable-mc-product} would imply an affirmative answer
to Question~\ref{ques:countable-mc-sober}. Both answers are affirmative
for countable frames by Section~\ref{sec:products}, and for Artinian
meet-continuous complete lattices by Corollary~\ref{cor:artinian-mc}.

The algebraicity criterion also motivates a more general sobriety
question, without assuming a lattice structure.

\begin{question}\label{ques:countable-mc-dcpo}
Let $P$ be a countable meet-continuous dcpo. Must its Scott space
$\Sigma P$ be sober?
\end{question}

An affirmative answer to Question~\ref{ques:countable-mc-dcpo} would
answer Question~\ref{ques:countable-mc-sober} affirmatively. The
product-to-sobriety implication from Proposition~\ref{prop:joint-sup-sober}
is used above only for complete lattices, not for arbitrary dcpos.
Any counterexample to either sobriety question must be non-Artinian by
Theorem~\ref{thm:dichotomy}. None of the seven constructions discussed
in Section~\ref{sec:artinian} supplies such a countable meet-continuous
counterexample.

One cannot strengthen these questions by asking for algebraicity:
Jia and Xi's non-continuous countable frames \citep{JiaXi2026} are
countable and meet-continuous but not algebraic. In particular, they
are non-Artinian by Corollary~\ref{cor:artinian-mc}, although their
Scott spaces are sober by Corollary~\ref{cor:countable-sober}.
Thus countability cannot replace Artinianity in the algebraicity
conclusion; the issues here are precisely sobriety and Scott-product
compatibility.

A separate issue concerns the cardinal threshold for frames themselves.
The inequalities in Theorem~\ref{thm:zfc-bounds} leave the following
questions when no hypothesis on the continuum is imposed.

\begin{question}\label{ques:cardinality}
Does ZFC prove that $\sfr=\aleph_1$? Does it prove the stronger assertion
$\ssp=\aleph_1$?
\end{question}

The spatial assertion in Question~\ref{ques:cardinality} is equivalent,
by Corollary~\ref{cor:spectrum-upward}, to the assertion that every
uncountable cardinal is the cardinality of a spatial frame with a
non-sober Scott space. Corollary~\ref{cor:ch} proves both assertions under
CH. This motivates the corresponding consistency question.

\begin{question}\label{ques:independence}
Is it consistent with ZFC that $\ssp>\aleph_1$?
\end{question}

In any such model one must have $\cc>\aleph_1$. Conversely, when
$\cc>\aleph_1$, Proposition~\ref{prop:countably-based-barrier} excludes
countably based spatial representations for a counterexample of
cardinality $\aleph_1$, but does not exclude a spatial counterexample
with an uncountable base. The results here therefore give the bounds and
the CH case, without asserting an independence theorem or an implication
from any forcing axiom.

\section*{Statements and Declarations}

\noindent\textbf{Funding.}
This research was supported by the National Natural Science Foundation of
China (Nos. 12471070, 12071199).

\vspace{0.1cm}

\noindent\textbf{Competing interests.}
The author declares no competing interests.

\vspace{0.1cm}

\noindent\textbf{Data availability.}
No datasets were generated or analyzed in this study.

\vspace{0.1cm}


\raggedbottom

\end{document}